 \let\theoremstyle\relax
\DeclareMathAlphabet{\mathpzc}{OT1}{pzc}{m}{it}
\renewcommand{\a}{\alpha}   \renewcommand{\b}{\beta}
   \newcommand{\e}{\epsilon}
\newcommand{\ve}{\varepsilon}
\newcommand{\g}{\gamma}      
   \renewcommand{\l}{\lambda}
\newcommand{\m}{\mu}
\renewcommand{\r}{\rho}      
\renewcommand{\th}{\theta}      
\newcommand{\z}{\zeta}   
\newcommand{\D}{\Delta} 
\newcommand{\diag}{\operatorname{diag}}
\newcommand{\osc}{\operatorname{osc.}}
\newcommand{\newvec}[1]{\vbox{\ialign{##\crcr$\displaystyle\rightharpoonup$\crcr\noalign{\kern-1pt\nointerlineskip}
$\hfil\displaystyle{#1}\hfil$\crcr}}} 
\newcommand{\longrightharpoonup}{\relbar\joinrel\rightharpoonup}
\newcommand{\longvec}[1]{\vbox{\ialign{##\crcr$\displaystyle\longrightharpoonup$\crcr\noalign{\kern-1pt\nointerlineskip}
$\hfil\displaystyle{#1}\hfil$\crcr}}} 
\theoremstyle{theorem}
\newtheorem{thrm}{Theorem}[section]
\newtheorem{dfn}[thrm]{Definition}
\newtheorem{remark}[thrm]{Remark}
\newtheorem{exam}[thrm]{Example}
\newcommand{\TV}{\text{T.V.}}
\newcounter{fig}
\newenvironment{fig}[1][]{\refstepcounter{fig}\par\medskip\noindent\center{}}
\title{Global Transonic Solutions to Combined Fanno Rayleigh Flows Through Variable Nozzles\footnote{Research is supported by the National
         Science Council of Taiwan and Center for Theoretical Sciences, Mathematical Division, NCU.}}
\author{Shih-Wei Chou\footnote{E-mail: swchou@math.ncu.edu.tw},\ John M. Hong\footnote{E-mail: jhong@math.ncu.edu.tw},\ Bo-Chih Huang\footnote{E-mail: huangbz@math.ncu.edu.tw},\
and Reyna Quita\footnote{E-mail: reynaquita2905@gmail.com.}
\\
         {\small Department of Mathematics, National Central University,}\\{\small Taoyuan 32001, Taiwan}
          }
\date{}
\begin{document}
\maketitle


\date{}


\centerline{\today}
\medskip

\begin{abstract}
In this paper, we study the initial-boundary value problem of compressible Euler equations with friction and heating that model the combined Fanno-Rayleigh flows through symmetric variable area nozzles, in particular, the case of contracting nozzles is considered. A new version of a generalized Glimm scheme (GGS) is presented for establishing the global existence of transonic entropy solutions. Modified Riemann and boundary Riemann solutions are applied to design this GGS which obtained by  the contraction matrices acting on the homogeneous Riemann (or boundary-Riemann) solutions. The extended Glimm-Goodman wave interaction estimates are investigated for ensuring the stability of the scheme and the positivity of gas velocity that leads to the existence of the weak solution. The limit of approximation solutions serves as an entropy solution. Moreover, a quantitative relation between the shape of the nozzle, the friction, and the heat is proposed. Under this relation, the global existence of weak solution for contracting nozzle is achieved. Simulations of contraction-expansion and expansion-contraction nozzles are presented to illustrate the relevant theoretical results.
\\
\newline
{\bf MSC}: 65L50; 35L60; 35L65; 35L67; 76N10.\\
\\
{\bf Keywords}: Fanno-Rayleigh flows; transonic flow; compressible Euler equations; entropy solutions; ; initial-boundary value problem;
Riemann problem; boundary Riemann problem; generalized Glimm scheme.
\end{abstract}

\section{Introduction}
\setcounter{section}{1}
\label{sec1}
In 1982, Tai-Ping Liu \cite{Liu1} showed that, for the full compressible Euler equations of gas dynamics, flows along an expanding duct are always asymptotically stable, whereas flows with a standing shock wave along a contracting duct are dynamically unstable. Since then the global existence of time evolution solution for nozzle flow with contracting duct is unsolved for decades. In this paper, we prove that under certain appropriate friction and heat affect, the flow of entropy solution along the contracting duct exists globally. We consider the combined Fanno-Rayleigh flows through symmetric variable area nozzles, which are governed by the one-dimensional compressible Euler equations including those of friction and heating (see \cite{S,TS}):
\begin{eqnarray*}
&&(a \rho)_t + (a \rho u)_x =0,\\
&&(a \rho u)_t+(a \rho u^2+a P)_x =a_xP-\displaystyle \alpha \sqrt{a}\rho u|u|,\\
&&(a E)_t+(a u(E+P))_x =\beta a q(x)-\displaystyle \alpha \sqrt{a}\rho u^2|u|,
\end{eqnarray*}
where $\rho$, $u$, $E$ and $P$ are, respectively, the density, velocity, total energy and pressure of the gas, $\a$ is the coefficient of friction, $q(x)$ is a given function representing the heating effect from the force outside the nozzle. The cross section $a(x)$ of the nozzle is described by a Lipschitz continuous function. The model of Rayleigh flow ($\alpha=0$) has been studied frequently in the fields of fluid dynamics and aerodynamics, particularly regarding the design of aircraft engines. The model of Fanno flow ($\beta=0$) is always adopted in many engineering fields, such as stationary power plants, transport of natural gas in long pipe lines and for the design and analysis of fluid motion in micro-scaled nozzles ($0<a(x)\ll 1$) in which flow friction plays a crucial role. Classically, the Fanno flow model has been considered as the flow in constant duct with friction \cite{S,TS}. By the result of dimensional analysis, the friction term should be modified to the form of $\a\sqrt{a}\rho u|u|$ in variable area duct case. To compare the effect of friction with that of heating $q$, we impose constant $\beta$ in the presented system. This paper focuses on combined effects of the heat and friction that cause the change of flow motion along variable area ducts.\\

We express he relation between $P$ and $E$ as
$$
E=\frac{1}{2}\rho u^2+\frac{P}{\gamma -1},
$$
where $\gamma$ is the adiabatic constant satisfying $1<\gamma\leq\frac{5}{3}$. The presented system can then be written as the following hyperbolic system of balance laws for the mass, momentum and total energy:
\begin{subequations}
\label{1.1}
\begin{alignat}{2}
&\rho_t + (\rho u)_x =-\displaystyle\frac {a_x}{a} \rho u,\\
&(\rho u)_t+(\frac{3-\gamma}{2}\rho u^2+(\gamma-1)E)_x =-\displaystyle\frac {a_x}{a}
\rho u^2-\displaystyle\frac {\alpha}{\sqrt{a}}
\rho u|u|,\\
&E_t+(u(\gamma E-\frac{\gamma-1}{2}\rho u^2))_x
=-\displaystyle\frac {a_x}{a} u(\gamma E-\frac{\gamma-1}{2}\rho
u^2)-\frac{\alpha}{\sqrt{a}}\rho u^2|u| + \beta q(x).
\end{alignat}
\end{subequations}
Let
\begin{align}
\label{1.2.0}
 &m:=\rho u,\quad U:=(\rho , m, E), \quad h_1(x):=-\displaystyle\frac {a_x}{a}, \quad h_2(x):= - \frac{\alpha}{\sqrt{a}}, \\
 &F(U):=\left(m, \frac{3-\gamma}{2}\frac{m^2}{\rho}+(\gamma-1)E , \frac{m}{\rho}(\gamma E-\frac{\gamma-1}{2}\frac{m^2}{\rho})  \right),
 \quad \\
 &G(x,U):=\left(h_1 m,(h_1+h_2)
\frac{m^2}{\rho},h_1\frac{m}{\rho}(\gamma
E-\frac{\gamma-1}{2}\frac{m^2}{\rho})+h_2\frac{m^3}{\rho^2}+\beta q(x) \right).
\end{align}
System \eqref{1.1} is then written into the following compact form
\begin{equation}
\label{1.2.1}
U_t+F(U)_x=G(x,U).
\end{equation}
The initial-boundary value problems of \eqref{1.2.1}, subject to the initial and boundary data near the sonic states, is set forth as follows:
\begin{alignat}{3}
\label{1.2.2} \left\{ \setlength\arraycolsep{0.1em}
\begin{array}{ll}
U_t+F(U)_x=G(x,U),\quad x>x_B,\; t>0,\\
U(x,0)=U_0(x)\in \Omega,\\
U(x_B,t)=U_B(t),\; t>0,
\end{array}\right.
\end{alignat}
where
$U_0(x)=(\rho_0(x),m_0(x),E_0(x))=(\rho_0(x),\rho_0(x)u_0(x),E_0(x))$,
and
\begin{equation}
\label{1.2.3}
\Omega:=\{U|  \min\{m_B(t)\}\geq r^* \cdot T.V.\{U_0\}\quad \text{and} \quad\|U-U_0||_{L^{\infty}}\leq r\}
\end{equation}
is a ball of radius $r$ in $\mathbb{R}^2$, and $T.V.\{U\}$ is the total variation of $U$, centered at particular sonic state
$$
U_*\equiv(\rho_*,m_*,E_*)\in \mathcal{T}:=\{(\rho , m, E): \;m=\rho c,\;\rho \geq c>0 \},\ \text{where }c:=\sqrt{\gamma(\gamma-1)(\frac{E}{\rho}-\frac{u^2}{2})}.
$$
We call the curve $\mathcal{T}$ the transition curve. The boundary condition is given as
\begin{equation}
\label{bdcond}
U_B(t)=\left\{\begin{array}{lll}
(\r_B(t),m_B(t),E_B(t)), &\text{if } u(x_B,0)>c(x_B,0), & \text{(supersonic boundary)} \\
(\r_B(t),m_B(t)),        &\text{if } u(x_B,0)\le c(x_B,0), & \text{(subsonic boundary)}.
\end{array}\right.
\end{equation}
\\

In this paper, we propose a new generalized Glimm method for establishing the global existence of transonic entropy solutions for \eqref{1.2.2}. Throughout this paper, we impose the following conditions:
\begin{enumerate}
\item [(A$_1$)] $\rho_0(x)$, $m_0(x)$ (or $u_0(x)$) and $E_0(x)$ are bounded positive functions with small total variations, and $q(x)\geq 0$ for all $x\geq x_B$;
\item [(A$_2$)] there exists $a^*>0$ such that for every $t\geq 0$, we have
$$
\|h_{1}'\|_{L^1}\leq a^*, \quad \|h_{2}'\|_{L^1}\leq a^* \quad\text{and}\quad \|q' \|_{L^1}\leq a^*,
$$
where $\|h'\|_{L^1}\equiv \int_{x_B}^{\infty}h'(s)ds$;
\item [(A$_3$)] $\rho_0(x)$, $u_0(x)$, $h_1$, $h_2$ and $q$ satisfy
$$
\frac{7-\gamma}{3}{h_1} u_0+ \frac{4}{3}{h_2} u_0 -\frac{\gamma(\gamma-1)}{\rho_0 c^2_0}\beta q<0
$$
for $x_B\leq x<\infty$.
\item [(A$_4$)] for any $0<\epsilon\ll1$, there exists a positive constant $\mathcal{C}$ depending on the initial and boundary data such that
$$
\min\limits_{t\geqslant 0}\{m_B(t)\}>(1+\epsilon)T.V.\{U_0\}+(1+\epsilon+\epsilon^2)^2\mathcal{C},
$$
where $T.V.\{U\}$ is the total variation of $U$ on $x_B\leq x<\infty$.
\end{enumerate}

We review previous results related to this topic and clarify the motivation of the study. When the duct is uniform and the effects of friction and heat are neglected, system \eqref{1.2.1} is reduced to a strictly hyperbolic system of conservation laws
\begin{equation}
\label{1.4}
U_t + F(U)_x=0.
\end{equation}
The entropy solutions to the Riemann problem of \eqref{1.4} were first constructed by Lax \cite{LA1}. The solutions are self-similar and consist of constant states separated by elementary waves: rarefaction waves, shocks and contact discontinuities. Furthermore, the global existence of weak solutions to the Cauchy problem of \eqref{1.4} was established by Glimm \cite{GL}, who applied Lax's solutions as the building blocks of a finite difference scheme. For the quasi-linear hyperbolic system of balance laws
\begin{eqnarray}
\label{1.5.1}
 U_t +F(x,U)_x = G(x,U),
\end{eqnarray}
the Cauchy problem was first studied by Liu \cite{TP1}, who showed that if the eigenvalues of $\partial_U F$ are nonzero and the $L^1$-norms of $G$ and $\partial_U G$ are sufficiently small, then weak solutions exist globally and tend asymptotically to stationary solutions. If $F$ and $G$ are independent of $x$, the existence result for the nonlinear waterhammer problem was established by Luskin and Temple \cite{LT} by combining Glimm's scheme with the method of fractional steps. Recently, the authors in \cite{HCHY} showed the global existence of the transonic entropy solution of \eqref{1.2.1} where $G$ is composed of the gravity and heat. For the Cauchy problem of the general quasi-linear, strictly hyperbolic system of balance laws is expressed as
\begin{eqnarray}
\label{1.5.3}
 U_t +F(x,t,U)_x = G(x,t,U).
\end{eqnarray}
The local existence of entropy solutions was first established by Dafermos-Hsiao \cite{DH} under the assumption that the eigenvalues of $\partial_U F$ are nonzero and the constant solution $U\equiv0$ is the steady state solution for all $(x,t)$. Furthermore, the global existence was also obtained under additional dissipative assumptions regarding the flux and source. In \cite{CHS}, the dissipative assumption in \cite{DH} was relaxed to obtain global existence results for the Cauchy problem of nozzle flow. System \eqref{1.5.3} has also been studied by LeFloch-Raviart
\cite{LR} and Hong-LeFloch \cite{HL} using an asymptotic expansion around the classical Riemann solutions. The shock wave model of Einstein's equations, which can be written as a degenerate $4 \times 4$ hyperbolic balance law \eqref{1.5.3}, was studied by Groah-Smoller-Temple \cite{GST} by applying fractional time-step scheme.

We notice that, system \eqref{1.1} can be considered as a $4\times 4$ resonant system
\begin{align}
\label{1.5.2}
\begin{split}
 a_t&=0, \\
 U_t +F(a(x),U)_x& = a'(x)G(a(x),U)
\end{split}
\end{align}
whose eigenvalues of the Jacobian matrix for flux coincide in the sonic states at which the strict-hyperbolicity of \eqref{1.5.2} fails. The Cauchy problem for the $2 \times 2$ resonant system has been studied by Isaacson-Temple \cite{IT1, IT2} and Hong-Temple \cite{HT,HT1}, whereas generalized Glimm method for $n \times n$ strictly hyperbolic case have been proposed in \cite{JH, LF1, LL}. In particular, in \cite{JH} the author showed that the residual case only converges weakly in $L_1$. Research on resonant systems goes back to Liu \cite{Liu2}, Temple \cite{TE1}, and Goatin-LeFloch \cite{GOL}. Additional results on nonconservative systems, have been published in \cite{LF, LL, DA}.\\

The presented results have substantially contributed to research on both $n \times n$ strictly hyperbolic systems and $2\times 2$ resonant systems. However, no extended Glimm method thus far has yielded a satisfactory global existence result  for to the compressible Euler equations of transonic Fanno-Rayleigh flow, which can be reformulated as a $4\times 4 $ resonant system. Because of possible resonance, spatial BV-norm estimates are not generally available. Instead, the compensated compactness method is used only for studying $L^{\infty}$ solutions to the transonic flow, which can be reviewed in \cite{CSW,MO}. In this paper, we prove that there exist BV entropy solutions of the transonic Fanno-Rayleigh flow through new generalized Glimm method.

In contrast to previous methods, this new method considers the appearance of the source terms in \eqref{1.2.1} as the cause of perturbations to the self-similar Riemann or boundary Riemann solutions of homogeneous conservation laws. Therefore, through the techniques of linearization, the approximate solution $U$ of the Riemann or boundary Riemann problems, which are the building blocks of the generalized Glimm scheme, is constructed by combining the traditional Riemann (or boundary Riemann) solution $\widetilde{U}$ with the perturbation $\overline{U}$ to solve the linearized system around $\widetilde{U}$. Under this construction, $U$ is no longer self-similar. By using the operator-splitting method for the linearized problem of $\overline{U}$ and by using the averaging process to smooth the shocks in $\widetilde{U}$, we can reformulate $U$ as
$$
U(x,t)=S(x,\Delta t,\widetilde{U}(x,t))\cdot \widetilde{U}(x,t)
$$
for a $3 \times 3$ matrix $S(x,\Delta t,\widetilde{U})$ called the contraction matrix. Thus, we present a new type of approximate solution which has the structure of elementary waves connected by functions as background states. Moreover, using this as the building block of a GGS, we can construct the approximate solutions $\{U_{\Delta x}\}$ of \eqref{1.2.2}. For the stability of scheme, the uniform bound of the total variation of
$\{U_{\Delta x}\}$ is obtained through two steps:
(1) proving that Glimm functional is non-increasing in time, and
(2) showing that all elements of the set $\{\overline{U}_{\Delta x}\}$ have finite total variation in each time step. The non-increasing of Glimm functional can be calculated according to the modified wave interaction estimates. Contrary to the method used in \cite{CHS, CHS1, CHS2, HCHY}, in which used 1-norm to estimate the wave strength, we estimate the wave strength in vector form by 2-norm to obtain more accurate to the arc length of the wave curve (see Theorem \ref{thm3.1}). Because of the effect of $S(x,\Delta t,\widetilde{U}_{\Delta x}(x,t))$, such wave interaction estimates yield a new dissipative relation between the wave strengths of the locally outgoing and incoming waves in $\{\widetilde{U}_{\Delta x}\}$. Furthermore, we show that the velocity $\{u_{\Delta x}=\frac{m_{\Delta x}}{\rho_{\Delta x}}\}$ is positive for all values of time, thus leading to a weaker dissipative condition (see ($A_3$)) than that in \cite{DH} for the stability of the scheme.

The new dissipative condition ($A_3$) gives us a quantitative relation between the shape of the nozzle which tells us that with certain specific friction, depending  on the shape of the duct, the time evolution solution for \eqref{1.2.2} exists globally even in the contracting duct case. Therefore, we conjecture that the traditional 1-dimensional inviscid nozzle flow equation is a critical model case for modeling the fluid in the nozzle. Moreover, compared to \cite{LXY} in which the shape of the nozzle is $O(x^3)$, condition ($A_3$) implies that with friction effect, the shape of the nozzle could be $O(x^2)$ in 1-dimensional case. Indeed, the numerical simulation in Example \ref{exam5.2} indicates that the initial shock front can pass through the contraction-expansion nozzle when the duct has friction. Example \ref{exam5.3} demonstrate that different shape of nozzles allow different friction parameter $\a$ to obtain the existence of flows. Throughout this paper, we rule out the vacuum case to avoid resonance.\\

Here, we provide the definitions of the weak and entropy solutions of problem \eqref{1.2.2}, and state the main theorem.
\begin{dfn}
\label{def1.1}
Consider the initial-boundary value problem \eqref{1.2.2} satisfying conditions $(A_1)$-$(A_4)$. We say that a bounded measurable function $U$ is a weak solution of \eqref{1.2.2} if
\begin{equation}
\label{1.7} \iint_{x>x_B,t>0}
\left\{U\phi_t+F(U)\phi_x+{G}(x,U)\phi\right\} dx dt
+\int^{\infty}_{x_B} U_0(x)\phi(x,0) dx+\int^{\infty}_{0}
f(U_B(t))\phi(x_B,t) dt=0
\end{equation}
for any test function ${\phi} \in C^1_0([x_B,\infty)\times[0,\infty))$.
\end{dfn}

\begin{dfn}
\label{def1.2}
Let $\Omega$ be a convex subset of $\mathbb{R}^2$. We say that $(\eta(U),q(U))$ is an entropy pair of \eqref{1.1} provided that $\eta$ is convex on $\Omega$ and
\begin{eqnarray}
\label{1.8}
{Dq}={D\eta}\cdot{DF}\quad\text{on } \Omega.
\end{eqnarray}
Furthermore, a bounded measurable function $U$ is called an entropy solution of \eqref{1.2.2} if $U$ is a weak solution of \eqref{1.2.2} and satisfies
\begin{equation}
\label{1.9}
\displaystyle\iint_{x>x_B,t>0}\left\{\eta \phi_t+q\phi_x+{D\eta}\cdot{G}\phi\right\} dxdt
\displaystyle+\int^\infty_{x_B}\eta(U_0(x))\phi(x,0)dx+\int^{\infty}_{0} q(U_B(t))\phi(x_B,t) dt \geq 0
\end{equation}
for every entropy pair $(\eta(U),q(U))$ and positive test function ${\phi} \in C^1_0([x_B,\infty)\times[0,\infty))$.
\end{dfn}\;

\noindent {\bf Main Theorem.}
{\it Consider the initial-boundary value problem \eqref{1.2.2} satisfying conditions {\rm(}A$_1${\rm)}-{\rm(}A$_4${\rm)}. Let $\{U_{\theta,\Delta x}\}$ be the sequences of approximate solutions of \eqref{1.2.2} through the generalized Glimm scheme, there exist a null set $N \subset \Phi$ and a subsequence $\{\Delta x_{i}\}\rightarrow 0$ such that if $ \theta \in \Phi\setminus N$, then $$U(x,t):= \lim_{\scriptstyle \Delta x_i\rightarrow 0} U_{\theta,\Delta x_i}(x,t)$$ is an positive entropy solution of \eqref{1.2.2}. In particular, the velocity $u(x,t)>0$ for all $(x,t)\in [x_B,\infty)\times[0,\infty)$.}\\

The remainder of is paper is organized as follows. In Section \ref{sec2}, we construct the approximate solutions of the generalized Riemann and boundary-Riemann problems, which are combinations of the classical Riemann solutions and perturbation terms for solving the linearized system around the Riemann and boundary-Riemann solutions of homogeneous conservation laws. In addition, for the consistency of the GGS, we estimate the residuals of such approximate solutions. In Section \ref{sec3}, we present the GGS whose building blocks are the approximate solutions constructed in Section \ref{sec2}. We prove the modified version of Glimm-Goodman's wave interaction estimates. In Section \ref{sec4}, we prove the non-increasing of Glimm functionals in time and the uniform boundedness of total variations for the perturbation terms in each time step for the stability of GGS. Finally, we obtain the consistency of GGS by showing that the residual approaches zero as grid sizes approach zero. In addition, the entropy inequality is shown. So, we establish the global existence of the entropy solution to \eqref{1.2.2}. The numerical simulations are presented in Section \ref{sec5}.\

\section{Generalized solutions of Riemann and boundary-Riemann problems}
\setcounter{equation}{0}
\label{sec2}

In this section, we construct the approximate solutions to the Riemann and boundary-Riemann problems of \eqref{1.1}. First, for given $\Delta x$, $\Delta t>0$ satisfying the Courant-Friedrichs-Levy condition
\begin{equation}
\label{2.0.0.0}
\displaystyle\frac{\Delta x}{\Delta t} > \sup\limits_{(\rho,m)
\in\Omega}\left\{\frac{m}{\rho}+\sqrt{\gamma(\gamma-1)(\frac{E}{\rho}-\frac{u^2}{2})}\right\},
\end{equation}
we define the regions
\begin{align}
\label{ingrid}
D(x_0,t_0)&:=\{(x,t)\mid|x-x_0|<\Delta x, \;t_0<t< t_0+\Delta t\},
\end{align}
and
\begin{align}
\label{bgrid}
D_B(x_B,t_0)&:=\{(x,t)\mid x_B<x<x_B+\Delta x, \;  t_0<t< t_0+\Delta t\}.
\end{align}
Then, the Riemann problem of \eqref{1.1}, denoted by $\mathcal{R}_G(x_0,t_0)$, is given by
\begin{equation}
\label{2.0}\left\{
\begin{array}{ll}
U_t+F(U)_x=G(x,U),\quad (x,t)\in D(x_0,t_0),\medskip\\
U(x,t_0) = \left\{
\begin{array}{lc}
 U_L,&  \text{if }\; x_0-\Delta x<x<x_0,\\
 U_R,&  \text{if }\; x_0<x<x_0+\Delta x.
 \end{array}\right.
\end{array}\right.
\end{equation}
The boundary-Riemann problem of \eqref{1.1}, denoted by
$\mathcal{BR}_G(x_B,t_0)$, is given by
\begin{equation}
\label{BRP}
\left\{\begin{array}{ll}
U_t+f(U)_x=G(x,U),         &(x,t)\in D_B(x_B,t_0), \\
U(x,t_0)=U_R,                  &x_B\le x\le x_B+\D x, \\
U(x_B,t)=U_B, &t_0<t<t_0+\D t,
\end{array}\right.
\end{equation}
where $m$, $U$, $F$ and $G$ are given in \eqref{1.2.0}, $U_L=(\rho_L, m_L, E_L)$, $U_R=(\rho_R, m_R, E_R)$ are positive constant states, and $U_B$ are positive constant state $(\r_B,m_B)$ or $(\r_B,m_B,E_B)$ as in \eqref{bdcond}. By setting source term $G \equiv 0$ in \eqref{2.0}, we have the corresponding classical Riemann and boundary-Riemann problems denoted by $\mathcal{R}_C(x_0,t_0)$ and $\mathcal{BR}_C(x_B,t_0)$ respectively.

\subsection{Approximate generalized solutions of $\mathcal{R}_G(x_0,t_0)$ and $\mathcal{BR}_G(x_B,t_0)$}
The purpose of this subsection is to provide the approximate generalized solutions of $\mathcal{R}_G(x_0,t_0)$ and $\mathcal{BR}_G(x_B,t_0)$. We first start at the entropy solution of $\mathcal{R}_C(x_0,t_0)$ and $\mathcal{BR}_C(x_B,t_0)$. System \eqref{1.2.1} is a strictly hyperbolic system whose Jacobian matrix $DF$ has three distinct real eigenvalues
$$
\lambda_1(U):= u-c(U),\quad \lambda_2(U):=u,\quad \lambda_3(U):=u+c(U),
$$
where
$$
u=\frac{m}{\rho},\quad c(U)=\sqrt{\gamma(\gamma-1)(\frac{E}{\rho}-\frac{u^2}{2})}.
$$
Here $c(U)$ is called the sound speed. We define the curve of sonic states as $$\mathcal{T}:=\{U;\; u=c(U)\}.$$ The corresponding eigenvectors of $\lambda_1$, $\lambda_2$ and $\lambda_3$ are, respectively,
\begin{eqnarray*}
R_1(U) &=& (-1,c-u,uc-H)^T, \\
R_2(U) &=& (1,u,\frac{u^2}{2})^T, \\
R_3(U) &=& (1,c+u,uc+H)^T,
\end{eqnarray*}
where
$$
H=H(U)=\frac{\gamma E}{\rho}-\frac{\gamma-1}{2}u^2
$$
is the total specific enthalpy. It is easy to calculate that
\begin{equation*}
\nabla\lambda_i(U)\cdot R_i(U)=\frac{(\gamma+1)c}{2\rho}>0,\
i=1,3,\quad\text{and}\quad\nabla\lambda_2(U)\cdot R_2(U)=0,
\end{equation*}
which implies the 1st and 3rd characteristic fields are genuinely nonlinear and the 2nd characteristic field is linear degenerate. Therefore, the entropy solutions of $\CMcal{R}_C(x_0,t_0)$ and $\CMcal{BR}_C(x_B,t_0)$ consist of either shock waves, rarefaction waves from genuinely nonlinear fields or contact discontinuities from the linear degenerate field. For $i \in \{1,3\}$, each $i$-rarefaction wave is a self-similar function
$$
U=U(\zeta), \quad \zeta=\frac{x-x_0}{t-t_0},
$$
also it satisfies
\begin{equation}
\label{2.0.2}
(DF(U)-\zeta I_3)\cdot \frac{dU}{d\zeta}=0,
\end{equation}
where $I_3$ is the $3\times3$ identity matrix, the $i$-shock is a discontinuous solution satisfying the Rankine-Hugoniot condition
\begin{equation}
\label{2.0.3}
s [U]=[F(U)]
\end{equation}
and Lax's entropy condition
\begin{equation}
\label{2.0.4}
\lambda_i(U_R) <s< \lambda_i(U_L),
\end{equation}
where $s$ is the speed of the $i$-shock and $[\cdot]$ denotes the difference of states across the shock. For the later use, we let $\mathcal{R}_i(U_L)$ and $\mathcal{S}_i(U_L)$ denote, respectively, the $i$-shock and the $i$-rarefaction wave curves starting at $U_L$. For $i=2$, the solution consists of the $i$-contact discontinuity satisfying \eqref{2.0.3} and it behaves like the linear discontinuous waves due to the original
discontinuity of the Riemann data. By Lax's method in \cite{LA1,SM}, we can show the existence and uniqueness of the entropy solution to $\mathcal{R}_C(x_0,t_0)$. The solution consists of at most 4 constant states separated by either shock waves, rarefaction waves or a contact discontinuity.\

For the classical boundary-Riemann problem $\mathcal{BR}_C(x_B,t_0)$, when $U_B$, $U_R$ are near sonic states $\mathcal{T}$, the entropy solutions may not be unique even conditions \eqref{2.0.3}, \eqref{2.0.4} are imposed. In addition, some solutions may have large total variations even when $|U_R-U_B|$ is small. To overcome the difficulty, it is necessary to impose the following extra condition for the admissible weak solutions of $\mathcal{BR}_C(x_B,t_0)$:
\begin{enumerate}
\item[($\mathcal{E}$)] Weak solution $U=(\rho, m,E)$ is the entropy solution of $\mathcal{BR}_C(x_B,t_0)$ if $U$ has the least total variation in $\rho$ within all weak solutions of $\mathcal{BR}_C(x_B,t_0)$.
\end{enumerate}
Under condition $(\mathcal{E})$, we are able to select the unique entropy solution of $\mathcal{BR}_C$, moreover, the entropy solution does not consist of zero speed 1-shock attaching on $x=x_B$. We have the following theorem for the existence and uniqueness of entropy solutions of $\mathcal{R}_C(x_0,t_0)$ and $\mathcal{BR}_C(x_B,t_0)$.

\begin{thrm}
\label{thm2.1}
{\rm(\cite{LA1, SM})} Consider problems $\mathcal{R}_C(x_0,t_0)$ and $\mathcal{BR}_C(x_B,t_0)$. Let $U_L \in\Omega$. Then there is a neighborhood $\widetilde{\Omega}\subset\Omega$ of $U_L$ such that if $U_R\in\widetilde{\Omega}$, then $\mathcal{R}_C(x_0,t_0)$ has a unique solution consisting of at most four constant states separated by shocks or rarefaction waves and contact discontinuity. Moreover, under condition $(\mathcal{E})$, there exists $E_B>0$ and $U_B=(\rho_B, m_B,E_B)\in\widetilde{\Omega}$ such that $\mathcal{BR}_C(x_B,t_0)$ admits a unique solution $U$ satisfying $U(x_B,t)=U_B$.
\end{thrm}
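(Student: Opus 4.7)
The plan is to reduce both claims to Lax's wave-curve construction combined with the inverse/implicit function theorem, with one extra ingredient drawn from condition $(\mathcal{E})$ to settle the boundary case near sonic states.

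For $\mathcal{R}_C(x_0,t_0)$, I would, for each characteristic family $i\in\{1,2,3\}$, build the one-parameter wave curve $\mathcal{T}_i(U_L;\varepsilon_i)$ through $U_L$. For the genuinely nonlinear families $i=1,3$ this is the standard concatenation of the Hugoniot branch $\mathcal{S}_i(U_L)$ (for $\varepsilon_i<0$, selected by \eqref{2.0.3}-\eqref{2.0.4}) with the rarefaction branch $\mathcal{R}_i(U_L)$ (for $\varepsilon_i>0$, obtained by integrating \eqref{2.0.2}); these branches share the tangent $R_i(U_L)$ and agree to second order at $U_L$, so the assembled curve is $C^2$. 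For $i=2$ the contact-discontinuity curve is a smooth integral curve of $R_2$. Composing yields
\[
\Psi(\varepsilon_1,\varepsilon_2,\varepsilon_3):=\mathcal{T}_3\bigl(\mathcal{T}_2(\mathcal{T}_1(U_L;\varepsilon_1);\varepsilon_2);\varepsilon_3\bigr),
\]
whose Jacobian at $\varepsilon=0$ is the matrix $[R_1,R_2,R_3](U_L)$; this is nonsingular because the eigenvalues $\lambda_i(U_L)$ are distinct. The inverse function theorem then supplies a neighborhood $\widetilde{\Omega}$ of $U_L$ on which $\Psi^{-1}$ is well-defined, and the ordering $\lambda_1<\lambda_2<\lambda_3$ places the (at most) four constant states in disjoint angular sectors of $(x,t)$-space, producing the classical Lax solution.

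For $\mathcal{BR}_C(x_B,t_0)$, only waves of nonnegative speed may appear, since any leftward wave would leave the half-strip $\{x>x_B\}$ immediately. In the supersonic regime $u(x_B,0)>c(x_B,0)$ all three eigenvalues are positive at the boundary and the full vector $U_B=(\rho_B,m_B,E_B)$ is prescribed; the argument for $\mathcal{R}_C$ then applies verbatim to the rightward fan connecting $U_B$ and $U_R$. In the subsonic regime $u(x_B,0)\le c(x_B,0)$ the 1-characteristic points out of the domain, so only $(\rho_B,m_B)$ is data and $E_B$ is free. I would seek a solution made of a 2-contact discontinuity followed by a 3-wave, parameterized by $(\varepsilon_2,\varepsilon_3)$, and solve
\[
\mathcal{T}_3\bigl(\mathcal{T}_2((\rho_B,m_B,E_B);\varepsilon_2);\varepsilon_3\bigr)=U_R
\]
for the triple $(E_B,\varepsilon_2,\varepsilon_3)$. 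At the reference configuration $(\rho_B,m_B,E_B,\varepsilon_2,\varepsilon_3)=(\rho_R,m_R,E_R,0,0)$ the Jacobian with respect to $(E_B,\varepsilon_2,\varepsilon_3)$ has columns $(0,0,1)^T$, $R_2(U_R)=(1,u,u^2/2)^T$ and $R_3(U_R)=(1,u+c,uc+H)^T$; cofactor expansion along the first column gives determinant $c(U_R)>0$, so the implicit function theorem delivers a unique such triple for all $(\rho_B,m_B)$ and $U_R$ close to the reference, hence a unique $E_B>0$.

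The main obstacle is uniqueness across the transonic transition $u=c$: a zero-speed 1-shock satisfying \eqref{2.0.3}-\eqref{2.0.4} may be attached to $\{x=x_B\}$ without violating classical admissibility, and pairing it with a shifted 2-3 fan yields a competing candidate realizing the same boundary data $(\rho_B,m_B)$. I would discard this branch by invoking condition $(\mathcal{E})$: the attached 1-shock contributes a nonzero jump to $\rho$, strictly exceeding the total $\rho$-variation of the shock-free fan built above, and hence cannot be the minimizing solution. Continuity of the implicit function across the sonic transition then selects the unique entropy solution in $\widetilde{\Omega}$ with $U(x_B,t)=U_B=(\rho_B,m_B,E_B)$, as asserted.
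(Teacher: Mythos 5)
Your argument is correct and is essentially the argument the paper itself relies on: the paper offers no proof of Theorem \ref{thm2.1}, citing \cite{LA1,SM} for the interior Riemann problem, and your wave-curve construction composed with the inverse function theorem (Jacobian $[R_1,R_2,R_3](U_L)$ nonsingular by distinctness of the $\lambda_i$) is exactly the classical Lax/Smoller proof. The boundary half of the statement is not actually contained in those references --- it is a Goodman-type argument of the kind used in \cite{G1,HS} --- and your treatment of it is sound: in the subsonic case the reduced map in $(E_B,\varepsilon_2,\varepsilon_3)$ with Jacobian determinant $c(U_R)>0$ is the right way to recover the free component $E_B$, and you correctly identify that the only obstruction to uniqueness near the transition curve $\mathcal{T}$ is the competing configuration with a zero-speed $1$-shock attached to $x=x_B$, which is precisely what condition $(\mathcal{E})$ is designed to exclude. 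Two small points you should tighten if you write this out in full: (i) besides the attached $1$-shock there is the degenerate configuration in which a $1$-rarefaction fan is cut by the boundary, so that the trace is sonic; this occurs only when $(\rho_B,m_B)$ itself lies on $\mathcal{T}$ and is handled by the same $(\mathcal{E})$-minimization, but it should be mentioned; and (ii) the claim that the attached $1$-shock strictly increases the total variation of $\rho$ needs a first-order estimate --- the extra variation is $|\delta|\,|(R_1)_\rho|+O(|\delta|(|\delta|+|\varepsilon|))$ with $(R_1)_\rho=-1\neq 0$, so it is indeed positive for small data, but that computation is what makes the comparison rigorous rather than heuristic.
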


Next, based on Theorem \ref{thm2.1}, we are ready to construct the approximate solutions of $\mathcal{R}_G(x_0,t_0)$ and $\mathcal{BR}_G(x_B,t_0)$. We begin with $\mathcal{R}_G(x_0,t_0)$. Let $\widetilde{U}=(\widetilde{\rho},\widetilde{m},\widetilde{E})^T=(\widetilde{\rho},\widetilde{\rho}\widetilde{u},\widetilde{E})^T$ be the entropy solution of $\mathcal{R}_C(x_0,t_0)$. Then we construct the approximate solution $U$ of $\mathcal{R}_G(x_0,t_0)$ by
\begin{align}
\label{2.7.1}
U(x,t)&=\widetilde{U}(x,t) + \overline{U}(x,t)
\end{align}
for $(x,t)\in D(x_0,t_0)$ where $\overline{U}(x,t)$ is the perturbation term needed to be decided due to the appearance of the source terms. The detail of the construction for $\overline{U}$ will be given as follows. First, we consider the linearized system of \eqref{1.2.1} around $\widetilde{U}$ with initial data $\overline{U}(x,0)= 0$:
\begin{align} 
\label{2.8} \left\{
   \begin{array}{ll}
    \overline{U}_t+(A(x,t)\overline{U})_x=B(x,t)\overline{U}+
C(x,t), & (x, t)\in D(x_0,t_0),\\
     \overline{U}(x,0)= 0, & x_0-\Delta x\leq x\leq x_0+\Delta x,
   \end{array}
   \right.
\end{align}
where
{\color{black}
\begin{eqnarray*}
&& A(x,t)=dF(\widetilde{U})=\left[\begin{array}{ccc}
0 & 1 & 0 \\
\frac{\gamma-3}{2}\tilde{u}^2 & (3-\gamma)\tilde{u} & \gamma-1 \\
\vspace{-0.4cm} \\
\frac{\gamma-1}{2}\tilde{u}^3-\tilde{u}\widetilde{H} & \widetilde{H}-(\gamma-1)\tilde{u}^2 & \gamma\tilde{u}
\end{array}\right], \\
&& B(x,t)=G_U(x,\widetilde{U})=\left[\begin{array}{ccc}
0 & h_1 & 0 \\
-(h_1+h_2)\tilde{u}^2 & 2(h_1+h_2)\tilde{u} & 0 \\
h_1 ( \frac{\gamma - 1}{2}\tilde{u}^3 - \tilde{u} \widetilde{H}) -2 h_2 \tilde{u}^3 & h_1(\widetilde{H}-(\gamma-1)\tilde{u}^2 ) + 3 h_2 \tilde{u}^2 & h_1 \gamma \tilde{u}
\end{array}\right], \\
&& C(x,t)=G(x,\widetilde{U})=\left[\begin{array}{c}
h_1 \tilde{m} \\ (h_1+h_2)\tilde{\rho} \tilde{u}^2 \\ h_1 \tilde{m}\widetilde{H}+h_2 \tilde{m} \tilde{u}^2 +\beta q
\end{array}\right].
\end{eqnarray*}
}
Applying the operator-splitting method to \eqref{2.8} and observing that the homogeneous problem \eqref{2.8} admits the zero solution, we can approximate the solution of \eqref{2.8} by the one solving
\begin{align} 
\label{2.8.10}
   \left\{\begin{array}{ll}
    \overline{U}_t=B(x,t) \overline{U}+C(x,t), & (x, t)\in D(x_0,t_0),\\
     \overline{U}(x,0)= 0, &  x_0-\Delta x\leq x\leq x_0+\Delta
     x.
   \end{array}\right.
\end{align}
To obtain the better regularity of approximate solutions for \eqref{2.8.10}, the averaging process of the coefficients in \eqref{2.8.10} with respect to $t$ over $[0,\D t]$ is used. For a bounded variation function $w(x,t,U)$, we define the average of $w(x,t,U)$ as
\begin{equation}
\label{2.8.11} w_*(x):=\frac{1}{\Delta t}\int_{t_0}^{t_0+\Delta
t}w(x,s,U(x,s))ds, \quad x_0-\Delta x\leq x\leq x_0+\Delta x.
\end{equation}
Note that $w_*(x)$ is continuous even across the shock. Then, \eqref{2.8.10} is modified into the following problem:
\begin{align} 
\label{2.8.12} \left\{
   \begin{array}{ll}
    \overline{U}_t=B_*(x) \overline{U}+C_*(x), & (x, t)\in D(x_0,t_0),\\
     \overline{U}(x,0)= 0, &  x_0-\Delta x\leq x\leq x_0+\Delta   x,
   \end{array}
   \right.
\end{align}
where $B_*(x)$ and $C_*(x)$ are obtained by the average process \eqref{2.8.11}. To solve \eqref{2.8.12}, the eigenvalues of matrix
$B_*(x)$ are
\begin{equation}
\label{2.8.3} \sigma_1:= \gamma h_1 \tilde{u}_*,\quad \sigma_2 :=
(h_1 + h_2 - \sqrt{h_1 h_2 + h_2^2})\tilde{u}_*,\quad \sigma_3 :=
(h_1 + h_2 + \sqrt{h_1 h_2 + h_2^2})\tilde{u}_*,
\end{equation}
and the corresponding generalized eigenvectors are $P_1(x)=(0,0,1)^T$, and
\begin{eqnarray*}
&&P_2(x)=\Big(h_1(\sigma_2-\sigma_1),\ \sigma_2(\sigma_2-\sigma_1), \\
&&          \hspace{1.7cm}-h_1(\frac{\sigma_1}{\gamma}(H-\frac{(\gamma -1) \tilde{u}^2}{2}) + \sigma_2((\gamma - 1)\tilde{u}^2-H)
            +h_2(3 \sigma_2 - 2 h_1 u)u^2\Big)^T, \\
&&P_3(x)=\Big(h_1(\sigma_3-\sigma_1),\ \sigma_3(\sigma_3-\sigma_1), \\
&&          \hspace{1.7cm}-h_1(\frac{\sigma_1}{\gamma}(H-\frac{(\gamma -1) \tilde{u}^2}{2}) + \sigma_3((\gamma - 1)\tilde{u}^2-H)
            +h_2(3 \sigma_3 - 2 h_1 u)u^2\Big)^T.
\end{eqnarray*}
Therefore, the transformation matrix of $B_*(x)$ is $P(x)=[P_1(x),P_2(x),P_3(x)]$, and the Jordan form of $B_*(x)$ is of the form
$$
J(x)=P^{-1}(x)B_*(x)P(x)=\diag[\sigma_1,\sigma_2,\sigma_3].
$$
It means that, a fundamental matrix $e^{J(x)t}$ solving $\dot{X}(t)=J(x)X(t)$ is
\begin{align}
\label{2.11}
e^{J(x)t}=\diag[e^{\sigma_1t},e^{\sigma_2t},e^{\sigma_3t}].
\end{align}
By the transformation matrix $P(x)$, we obtain the state transition matrix of $\dot{X}(t)=B(x)X(t)$, which is given as
$$
N(x,t,s)=P(x)e^{J(x)(t-s)}P(x)^{-1}.
$$
Following the variation of constant formula, we obtain the solution of \eqref{2.8.10} given by
\begin{eqnarray}
\label{avgperturb}
&& \overline{U}_*(x,t)=\int_{t_0}^{t_0+t}N(x,t,s)C_*(x)ds\\
&& \hspace{1.5cm}=\left[\begin{array}{c}
(\frac{e^{\sigma_2 t} + e^{\sigma_3 t}}{2} +\frac{e^{\sigma_2 t} - e^{\sigma_3 t}}{2}\frac{\sqrt{h_2(h_1+h_2)}}{h_1+h_2} - 1)\tilde{\rho}_*\\
(\frac{e^{\sigma_2 t} + e^{\sigma_3 t}}{2}-1)\tilde{m}_* \\
n_1\tilde{m}_*+(n_2-1) \tilde{E}_*+\frac{(e^{\sigma_1 t}-1)\beta
q(x)}{\sigma_1}
\end{array}\right], \nonumber
\end{eqnarray}
where
\begin{eqnarray*}
n_1(x,t)&=&\frac{e^{\sigma_1 t}-1}{2}(1+\frac{\gamma h_2-2 h_1(\gamma -1)^2}{(\gamma -1)^2(h_1+h_2)-h_2 \gamma^2})\tilde{u}+\frac{e^{\sigma_2 t}-1}{4}\frac{\sigma_1(3\gamma \sigma_2-2\sigma_1)(\gamma-1)}{\gamma^2 \sigma_2(\sigma_1-\sigma_2)}\tilde{u}\\
&&+\frac{e^{\sigma_3 t}-1}{4}\frac{\sigma_1(3\gamma \sigma_3-2\sigma_1)(\gamma-1)}{\gamma^2 \sigma_3(\sigma_1-\sigma_3)}\tilde{u}\\
n_2(x,t)&=&e^{\sigma_1 t}+\frac{(e^{\sigma_1 t}-1)\gamma h_2}{(\gamma-1)^2(h_1+h_2)-\gamma^2 h_2}+\frac{e^{\sigma_2 t}-1}{2}\frac{\sigma_1(\sigma_1-\gamma\sigma_2)}{\gamma\sigma_2(\sigma_1-\sigma_2)}+\frac{e^{\sigma_3 t}-1}{2}\frac{\sigma_1(\sigma_1-\gamma\sigma_3)}{\gamma\sigma_3(\sigma_1-\sigma_3)}
\end{eqnarray*}
for $t\in[0,\Delta t]$. Replacing $\widetilde{U}_*$, $\widebar{U}_*$ in \eqref{avgperturb} by $\widetilde{U}$ and $\widebar{U}$, we finally obtain
\begin{equation}
\label{perturbsol}
\overline{U}(x,t)=(S(x,t,\widetilde{U})-I_3)\widetilde{U},
\end{equation}
On the basis of these steps and \eqref{2.7.1} the approximate solution $U(x,t)$ can be expressed as
\begin{equation}
\label{aproxsol4}
U(x,t)=S(x,t,\widetilde{U})\widetilde{U},
\end{equation}
where
\begin{equation}
\label{solver1}
S(x,t,\widetilde{U})=\left[\begin{array}{ccc}
\frac{e^{\sigma_2 t} + e^{\sigma_3 t}}{2} +\frac{e^{\sigma_2 t} - e^{\sigma_3 t}}{2}\frac{\sqrt{h_2(h_1+h_2)}}{h_1+h_2} & 0 & 0 \\
\vspace{-0.4cm} \\
0 & \frac{e^{\sigma_2 t} + e^{\sigma_3 t}}{2} & 0 \\
\frac{(e^{\sigma_1 t}-1)\beta q(x)}{\tilde{\rho}\sigma_1} & n_1 & n_2
\end{array}\right].
\end{equation}

The above averaging process is reasonable due to the facts that
\begin{equation*}
\iint_{D(x_0,t_0)}|\overline{U}-\overline{U}_*|dxdt=O(1)\left((\Delta
t)^3+(\Delta t)^2\cdot T.V._{D(x_0,t_0)}\{\widetilde{U}\}\right),
\end{equation*}
and
\begin{align}
\label{1stubar}
&\overline{U}=(S-I_3)\widetilde{U}\nonumber\\
&=\Big(h_1 \tilde{m}, (h_1+h_2)\tilde{u} \tilde{m},h_1 \tilde{m}(\frac{\tilde{c}^2}{\gamma - 1} + \frac{\tilde{u}^2}{2} )+h_2 \tilde{u}^2 \tilde{m}+ \beta q(x)\Big)^T\Delta t+O(1)(\Delta t)^2,
\end{align}
which leads to
\begin{equation}
\label{2.13.1}
|\overline{U}|=O(1)(\Delta t),
\end{equation}
and $\overline{U}\rightarrow 0$ as $t\rightarrow t_0$ and $h_1(x),\; h_2(x)\rightarrow 0$. This is consistent with the case of homogeneous hyperbolic conservation laws. Moreover, \eqref{2.7.1} still holds when $\widetilde{U}$ is a constant solution.

The construction for the approximate solution of $\CMcal{BR}_G(x_B,t_0;g)$ is similar to that for $\CMcal{R}_G(x_0,t_0;g)$. It means that the approximate solution of $\CMcal{BR}_G(x_B,t_0;g)$ is given by \eqref{aproxsol4} where $\widetilde{U}$ is the solution of $\CMcal{BR}_C(x_B,t_0)$. Note
that $\widebar{U}$ in $\CMcal{BR}_G(x_B,t_0;g)$ may not satisfy $\widebar{U}(x_B,t)=0$ because of \eqref{1stubar}, which means that approximate solution $U(x,t)$ on $x=x_B$ may not match boundary condition $U_B(t)$. However, by \eqref{2.13.1}, the difference between approximate solution
$U(x_B,t)$ and the boundary data $U_B$, $U_B(t)$ can be estimated by
\begin{eqnarray}
\label{2.14.0}
|U(x_B,t)-U_B| \le O(1)(\Delta t),
\end{eqnarray}
\begin{eqnarray}
\label{2.14}
&&|U(x_B,t),m(x_B,t)-U_B(t)|\le|\widebar{U}(x_B,t)|+|\widetilde{U}(x_B,t)-U_B(t)|\nonumber\\
&&\hspace{4.34cm}\le O(1)(\Delta t)+\underset{[t_0, t_0+\Delta t]}{\osc}\{{U}(x_B,\cdot)\},
\end{eqnarray}
where $\underset{I}{\osc}\{{U}(x_B,\cdot)\}$ denotes the oscillation of ${U}(x_B,\cdot)$ on $I$. We will show in the later sections that such construction for the solution of $\CMcal{BR}_G(x_B,t_0;g)$ does preserve the stability and consistency of the generalized Glimm scheme.

\subsection{Residuals of approximate solutions for Riemann and boundary-Riemann problems}

In this subsection, to obtain the consistency of the generalized Glimm scheme, we calculate the residuals for the approximate solutions of the Riemann and boundary-Riemann problems. Given a measurable function $U$, closed region $\Gamma \subset[x_B,\infty)\times[0, \infty)$ and test function $\phi\in C^1_0(\widebar{\Gamma})$ where $\Gamma \subset \widebar{\Gamma}$, we define the residual of $U$ for \eqref{1.2.1} in $\Gamma$ by
\begin{equation}
\label{res1}
R(U, \Gamma,\phi):=\iint_{\Gamma}\left\{U\phi_t+f(U)\phi_x+G(x,U)\phi\right\}dxdt.
\end{equation}
\begin{thrm}
\label{thm2.2}
Let $U$ and $U^B$ denote the approximate solutions of $\mathcal{R}_G(x_0,t_0)$ and $\mathcal{BR}_G(x_B,t_0)$ respectively, and let $U=\widetilde{U}+\overline{U}$ and $U^B=\widetilde{U}^B+\overline{U}^B$. Also let $\phi\in C^1_0(\Omega)$ be a test function where $D(x_0,t_0),\;D(x_B,t_0) \subset \Omega$. Then
\begin{align}
\label{RPres}
&R(U,\bar{D}(x_0,t_0),\phi)\nonumber\\
&=\int^{x_0+\Delta{x}}_{x_0-\Delta{x}}(U\phi)(x,t_0+\D t)dx
-\int^{x_0+\Delta{x}}_{x_0-\Delta{x}}\widetilde{U}(x, t_0^+)\phi(x, t_0)dx\nonumber\\
   &\quad+\int^{t_0+\Delta{t}}_{t_0}[f(U)\phi](x_0+\Delta{x}, t)dt
    -\int^{t_0+\Delta{t}}_{t_0}[f(U)\phi](x_0-\Delta{x}, t)dt\nonumber\\
   &\quad+O(1)\left((\Delta t)^2(\Delta
   x)+(\Delta{t})^3+(\Delta{t})^2\underset{D(x_0,t_0)}{\osc}\{\widetilde{U}\}\right)\|\phi\|_\infty,
\end{align}
\begin{align}
\label{BRPres}
&R(U^B, \bar{D}(x_B,t_0), \phi)\nonumber\\
&=\int^{x_B+\Delta{x}}_{x_B}(U^B\phi)(x, t_0+\D t)dx-\int^{x_B+\Delta{x}}_{x_B}\widetilde{U}^B(x, t_0^+)\phi(x, t_0)dx\nonumber\\
   &\quad+\int^{t_0+\Delta{t}}_{t_0}[f(U^B)\phi](x_B+\Delta{x}, t)dt-\int^{t_0+\Delta{t}}_{t_0}[f(U_B)\phi](x_B, t)dt\nonumber\\
   &\quad+O(1)\left((\Delta t)^2+(\Delta t)(\Delta x)+\Delta{t}\underset{D(x_B,t_0)}{osc}\{\widetilde{U}^B\}\right)\|\phi\|_\infty,
\end{align}
where $U_B$ given in Theorem \ref{thm2.1}, and $\underset{\Lambda}{osc}\{w\}$ is the oscillation of a function $w$ in the set $\Lambda$, and $\bar{D}(x_0,t_0)$, $\bar{D}(x_B,t_0)$ are the closures of $D(x_0,t_0)$, $D(x_B,t_0)$ in \eqref{ingrid}, \eqref{bgrid}, respectively.
\end{thrm}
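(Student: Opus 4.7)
My plan is to convert $R(U,\bar D(x_0,t_0),\phi)$ into boundary integrals plus an interior error via the divergence theorem, and then bound the interior error using the structure of $U=\widetilde U+\overline U$. Integration by parts on the rectangle gives
\[
R(U,\bar D(x_0,t_0),\phi)
= \int^{x_0+\Delta x}_{x_0-\Delta x}\![U\phi]^{t_0+\Delta t}_{t_0}dx
+ \int^{t_0+\Delta t}_{t_0}\![f(U)\phi]^{x_0+\Delta x}_{x_0-\Delta x}dt
- \iint_{D(x_0,t_0)}\!\! [U_t+f(U)_x-G(x,U)]\phi\,dx\,dt,
\]
and the first two terms are exactly the four boundary integrals in \eqref{RPres} once one uses $\overline U(x,t_0)=0$ from \eqref{2.8.12} to write $U(x,t_0^+)=\widetilde U(x,t_0^+)$. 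The interpretation is distributional: jumps of $\widetilde U$ across shocks are killed by the Rankine--Hugoniot condition \eqref{2.0.3}, rarefaction fans are handled by \eqref{2.0.2}, and $\overline U$ is smooth across the wave pattern by construction.

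To estimate the interior integral I Taylor expand the flux and source around $\widetilde U$, giving $f(U)-f(\widetilde U)=A(x,t)\overline U+O(|\overline U|^2)$ and $G(x,U)=C(x,t)+B(x,t)\overline U+O(|\overline U|^2)$, with $A,B,C$ as in \eqref{2.8}. Since $\widetilde U$ solves the homogeneous law distributionally, $\widetilde U_t+f(\widetilde U)_x=0$. Using $\overline U_t=B_*(x)\overline U+C_*(x)$ from \eqref{2.8.12} together with the pointwise averaging bounds $|B_*-B|+|C_*-C|=O(\osc_{D}\{\widetilde U\})$ inherited from definition \eqref{2.8.11}, the integrand reduces to
\[
U_t+f(U)_x-G(x,U) = (B_*-B)\overline U+(C_*-C)+[f(U)-f(\widetilde U)]_x+O(|\overline U|^2).
\]

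Integrating against $\phi$ over $D(x_0,t_0)$, whose area is $\asymp\Delta x\cdot\Delta t$, and using \eqref{2.13.1} and the CFL relation \eqref{2.0.0.0}, I expect the $(C_*-C)$ contribution to produce $O((\Delta t)^2\osc\{\widetilde U\})\|\phi\|_\infty$, which dominates the $(B_*-B)\overline U$ contribution because $|\overline U|=O(\Delta t)$. Integration by parts on $[f(U)-f(\widetilde U)]_x\phi$ yields an interior piece $\iint[f(U)-f(\widetilde U)]\phi_x$ of size $O((\Delta t)^2\Delta x)\|\phi\|_\infty$, plus a boundary piece of size $O((\Delta t)^2)$ which, using $\osc\{\widetilde U\}\ge|U_R-U_L|$ in the nontrivial Riemann configuration, is absorbed into $O((\Delta t)^2\osc\{\widetilde U\})\|\phi\|_\infty$; the $O(|\overline U|^2)$ remainder integrates to $O((\Delta t)^3)\|\phi\|_\infty$. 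Summing produces \eqref{RPres}. For \eqref{BRPres} the same divergence theorem on $\bar D(x_B,t_0)$ works, except that on the left face $[f(U^B)\phi](x_B,t)$ must be compared with $[f(U_B)\phi](x_B,t)$; the mismatch estimate \eqref{2.14.0} converts this into an extra $O((\Delta t)^2)$ term, and one power of $\Delta t$ is lost in the oscillation bound because $\widetilde U^B$ need not vanish at $t=t_0^+$ along $x=x_B$ when boundary and initial data disagree, yielding $O(\Delta t\cdot\osc\{\widetilde U^B\})$ rather than $O((\Delta t)^2\osc)$.

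The technically delicate step, and the main obstacle, is the distributional treatment of $[f(U)-f(\widetilde U)]_x$ across shocks of $\widetilde U$: the jumps of $A=Df(\widetilde U)$ at shock fronts, multiplied by the continuous $\overline U$, produce Dirac line contributions of strength $O(\osc\{\widetilde U\}\cdot\Delta t)$ which, integrated along the $\Delta t$-long shock segment in $D$ and paired with $\phi$, must land exactly on the $O((\Delta t)^2\osc)$ scale rather than inflate it. This is precisely why the time--averaging \eqref{2.8.11} was introduced, rendering $B_*,C_*$ continuous across shocks; verifying the averaging bounds rigorously, rather than only symbolically, is where the bulk of the real work in the proof lies.
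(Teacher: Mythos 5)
The paper offers no proof of Theorem \ref{thm2.2} beyond a pointer to Appendix B of \cite{HCHY}, and your overall architecture is exactly that computation: the exact Green identity on the cell, the decomposition $U=\widetilde U+\overline U$ with $\widetilde U_t+f(\widetilde U)_x=0$ distributionally (Rankine--Hugoniot across shocks), Taylor expansion of $f$ and $G$ about $\widetilde U$, the averaging bounds $|B_*-B|+|C_*-C|=O(\osc\{\widetilde U\})$, and the degraded boundary scale $O((\Delta t)^2+\Delta t\,\osc)$ coming from \eqref{2.14.0}--\eqref{2.14}. The identification of the error scales $(\Delta t)^2\Delta x$, $(\Delta t)^3$, $(\Delta t)^2\osc\{\widetilde U\}$ is correct.

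One step as written would fail, though: the claim that the lateral boundary piece produced by integrating $[f(U)-f(\widetilde U)]_x\phi$ by parts, of size $O((\Delta t)^2)$, can be absorbed into $O((\Delta t)^2\osc\{\widetilde U\})$ ``using $\osc\{\widetilde U\}\ge|U_R-U_L|$''. On the lateral faces $x=x_0\pm\Delta x$ the CFL condition forces $\widetilde U\equiv U_L$ (resp.\ $U_R$) for all $t\in[t_0,t_0+\Delta t]$, so there $f(U)-f(\widetilde U)=f(U_{L,R}+\overline U)-f(U_{L,R})=O(\Delta t)$ \emph{independently} of $|U_R-U_L|$; when the Riemann data coincide this piece is still a genuine $O((\Delta t)^2)$ while $\osc\{\widetilde U\}=0$, and a bare per-cell $O((\Delta t)^2)$ error does not vanish when summed over the $O\bigl(1/(\Delta x\,\Delta t)\bigr)$ cells meeting the support of $\phi$, so consistency would be lost. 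The repair is simply not to integrate by parts: your opening Green identity already carries $f(U)$ (not $f(\widetilde U)$) on the lateral faces, exactly as \eqref{RPres} requires, so you need only bound the distributional pairing of $[f(U)-f(\widetilde U)]_x$ with $\phi$ directly as a measure --- an absolutely continuous part of total mass $O((\Delta t)^2\Delta x+(\Delta t)^2\osc\{\widetilde U\})$ plus Dirac parts along the shock segments of mass $O((\Delta t)^2\osc\{\widetilde U\})$ --- which is precisely what your final paragraph describes. Two smaller caveats: the interior term $\iint[f(U)-f(\widetilde U)]\phi_x$ costs $\|\phi_x\|_\infty$ rather than $\|\phi\|_\infty$ (an abuse the paper shares), and $\overline U=(S-I_3)\widetilde U$ satisfies $\overline U_t=B_*\overline U+C_*$ only up to the substitution $\widetilde U_*\mapsto\widetilde U$, a discrepancy the paper itself quantifies as $\iint_{D}|\overline U-\overline U_*|=O(1)\bigl((\Delta t)^3+(\Delta t)^2\,\TV_{D}\{\widetilde U\}\bigr)$ and which stays inside your error budget.
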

\begin{proof}
The statement follows from the similar argument to the proof of Theorem 2.2 in \cite{HCHY} Appendix B.
\end{proof}
\section{Generalized Glimm scheme and wave interaction estimates}
\setcounter{equation}{0}
\label{sec3}

In this section, we introduce the generalized Glimm scheme (GGS). Also, to obtain the stability of GGS, we give the formulates of the modified version of Glimm-Goodman's wave interaction estimates.

\subsection{Generalized Glimm scheme for \eqref{1.2.2}}

In this subsection, we introduce the non-staggered generalized Glimm scheme for constructing the approximate solutions of \eqref{1.2.2}. To describe the scheme, we first partition the domain into
\begin{equation}
\label{3.0}
x_k=x_B+k\Delta x,\quad t_n=n\Delta t,\quad k, n=0,1,2,\cdots,
\end{equation}
for sufficiently small $\Delta x>0$ and $\Delta t>0$ satisfying the C-F-L condition \eqref{2.0.0.0}. The $n$-th time strip $T_n$ is given by
$$
T_{n}:=[x_B,\infty)\times[t_n,t_{n+1}),\quad n=0,1,2,\cdots.
$$
Suppose that the approximate solution, which is denoted by $U_{\theta,\Delta{x}}(x,t)$, has been constructed in $T_n$. Then, choose a random number $\theta_n \in (-1, 1)$ and define $U_k^n\equiv(\rho_k^n,m_k^n,E_k^n)$ by
$$
U_k^n:=U_{\theta,\Delta{x}}(x_{2k}+\theta_n\Delta{x},t_n^-),\ k=1,2,\cdots.
$$
To initiate the scheme, at $n=0$, we set $\theta_0$ and $t_0^-$ to be zero. The points $\{(x_{2k}+\theta_n\Delta{x},t_n^-)\}_{k=1}^{\infty}$ are called the $mesh \; points$ of the scheme, we further define the points $\{(x_0,t_n+\D t/2)\}_{n=0}^{\infty}$ as the mesh points on the boundary $x=x_B$. Then, $U_{\theta, \Delta{x}}$ in next $T_{n+1}$ are constructed by solving a set of Riemann problems $\mathcal{R}_G$ with Riemann data
\begin{align}
\label{tnstepini}
U(x,t_n)=\left\{
\begin{array}{ll}
U^n_k,&\; \mbox{if } x_{2k-1}\leq x<x_{2k},\\
U^n_{k+1},&\;\mbox{if }  x_{2k}<x\leq x_{2k+1},
\end{array}\right.\ k=1,2,\cdots.
\end{align}
and the boundary-Riemann problem $\mathcal{BR}_G$ with boundary-Riemann data near the boundary $x=x_B$:
\begin{alignat}{2}
\label{3.0.3}
\left\{\begin{array}{ll}
U(x,t_n)=U^n_1,&\mbox{if } x_0<x\leq x_1,\\
U(x_B,t)=U_B^n:=U_B(t_n), &\mbox{if }t_n\le t<t_{n+1}.
\end{array}\right.
\end{alignat}
where $U_B^n$ is chosen similar as \eqref{bdcond}. We mention that the entropy condition $(\mathcal{E})$ in Section 2
is imposed for the approximate solutions near the boundary.
\begin{fig}
\label{fig1}
\includegraphics[height = 4cm, width=10cm]{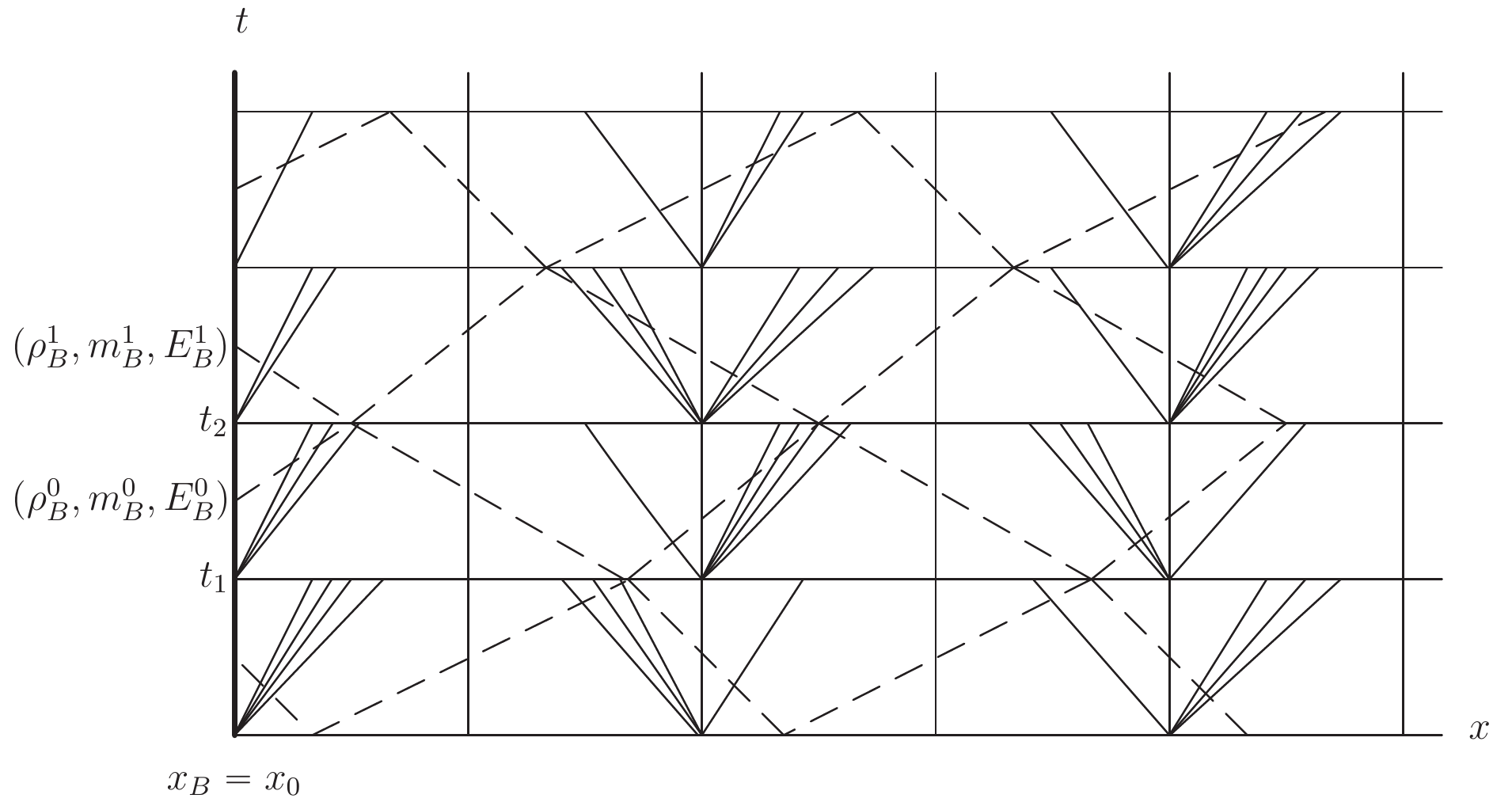}\\
Figure 1. Approximate solution of IBVP with mesh curves.\medskip
\end{fig}
Note that, in view of \eqref{2.7.1}, $U_{\theta ,\Delta{x}}$ has an explicit representation
\begin{equation}
\label{aproxsol2}
U_{\theta ,\Delta{x}}(x,t)=S(x,t,\widetilde{U}_{\th,\D x}(x,t))\widetilde{U}_{\th,\D x}(x,t),\quad (x,t) \in T_{n},
\end{equation}
where $\widetilde{U}_{\theta,\Delta{x}}$ consists of the set of weak solutions to the corresponding classical Riemann problems $\mathcal{R}_C$ and boundary-Riemann problem $\mathcal{BR}_C$ in $T_{n}$. The C-F-L condition \eqref{2.0.0.0} ensures that the classical elementary waves in each $T_{n}$ do not interact each other before time $t=t_{n+1}$. Repeating this process, we construct the approximate solution $U_{\theta,\Delta x}$ of \eqref{1.2.2} in the whole $[x_B,\infty)\times[0,\infty)$ by the generalized Glimm scheme with a random sequence $\theta:=(\theta_0, \theta_1, \theta_2,\ldots)$ in $(-1,1)$, see Figure \ref{fig1}.

To obtain the desired estimates, it is favorable to consider curves comprising line segments joining mesh points rather then horizontal lines. Therefore, we define the mesh curves for the non-local Glimm functionals introduced in \cite{GL}. A mesh curve $J$ for \eqref{1.2.2} is a piecewise linear curve that connects the mesh point $(x_{2k}+\theta_n\Delta x,t_n)$ on the left with $(x_{2k+2}+\theta_{n+1}\Delta x,t_{n+1})$ or $(x_{2k+2}+\theta_{n-1}\Delta x,t_{n-1})$ on its right, $k,n=0,1,2,\cdots$ together with the line segments joining the points $(x_B,t_n+\frac{\D t}{2})$ and $(x_B+\th_n\D x,t_n)$ and some portion of the boundary (see Figure \ref{fig1} and \cite{CHS1}). Simultaneously, the mesh curve $J$ divides the domain $[x_0,\infty)\times[0,\infty)$ into $J^+$ and $J^-$ regions such that $J^-$ contains the line $[x_0,\infty)\times\{0\}$. We can partially order two mesh curves by saying $J_2>J_1$ (or $J_2$ is a {\it successor} of $J_1$) if every mesh point of $J_2$ is either on $J_1$ or contained in $J_1^+$. In particular, $J_2$ is an {\it immediate successor} of $J_1$ if $J_2>J_1$ and all mesh points on $J_2$ except one are on $J_1$. A diamond region is a closed region enclosed by a mesh curve and its immediate successor.

\subsection{Wave interaction estimates}

In this subsection we study all kinds of nonlinear wave interactions and estimate the classical wave strengths in each time step through the wave interactions between the classical waves and the perturbations in the previous time step.\

By connecting all the mesh points by the mesh curves, the region $[x_B,\infty)\times \mathbb{R}^+$ can be decomposed as the sets of diamond, triangular and pentagon regions. The wave interactions can be divided into the following three types:
\begin{enumerate}
\item [(I)] In each diamond region, the incoming generalized waves from adjacent Riemann problems interact with each other and emerge as the outgoing generalized waves of the Riemann problem in the next time step;
\item [(II)] In each triangular region, the incoming generalized waves from the Riemann problem at the boundary interact with each other and emerge as the outgoing generalized waves of the boundary-Riemann problem
in the next time step;
\item [(III)] In each pentagonal region, two families of incoming generalized waves, one from the boundary-Riemann problem and the other from adjacent Riemann problem,
interact with each other and emerge as the outgoing generalized waves of the Riemann problem in the next time step.
\end{enumerate}
In each diamond (or triangular and pentagonal) region, all the generalized waves comprise classical outgoing waves and perturbations. Therefore, the objective of wave interaction estimates is to estimate how the wave strengths of classical outgoing waves are influenced by the interaction or reflection of generalized incoming waves.\

We start at the wave interaction estimates for type (I). Suppose $(x,t)\in (x_B,\infty)\times[0,\infty)$ and let $\mathcal{R}_G(U_R;U_L,x,t)$ denote the generalized Riemann solution of $\mathcal{R}_G(x,t)$ connecting left constant state $U_L$ with right constant state $U_R$. Also let $\mathcal{R}_C(U_R;U_L,x,t)$ be the solution of the corresponding classical Riemann problem $\mathcal{R}_C(x,t)$. Then, the {\it classical wave strength} of $\mathcal{R}_G(U_R;U_L,x,t)$ is defined as the wave strength of $\mathcal{R}_C(U_R;U_L,x,t)$ which can be written as
\begin{equation}
\label{3.1}
\ve=\varepsilon(U_R;U_L,x,t)=(\ve_1,\ve_2,\ve_3).
\end{equation}
In other words, the jump discontinuity $\{U_L,U_R\}$ is resolved into $U_L=\widetilde{U}_0$, $\widetilde{U}_1,\ \widetilde{U}_2$ and $\widetilde{U}_3=U_R$ such that $\widetilde{U}_{j+1}$ is connected to $\widetilde{U}_j$ on the right by a $j$-wave of strength $\varepsilon_j$. Note that $\mathcal{R}_C(U_R;U_L,x,t)$ is independent of the choice of $(x,t)$. We say that an $i$-wave and a $j$-wave approach if either $i>j$, or else $i = j$ and at least one wave is a shock. Given another $\mathcal{R}_G(U'_R;U'_L,x',t')$ with classical wave strength $\ve'=(\ve_1',\ve_2',\ve_3')$, the $wave$ $interaction$ $potential$ associated with $\a$, $\a'$ is defined as
\begin{equation}
\label{3.2}
D(\varepsilon,\varepsilon'):=\sum\{|\varepsilon_i\varepsilon_j'|:\varepsilon_i \mbox{ and } \varepsilon'_j \mbox{ approach}\}.
\end{equation}
Assume that $J'$ is an immediate successor of $J$. Let $\Gamma_{k,n}$ denote the diamond region centered at $(x_{2k},t_{n})$ and enclosed by $J$ and $J'$. Four vertices of $\Gamma_{k,n}$ are
\begin{align*}
   \begin{array}{ll}
     \mathpzc{N}=(x_{2k}+\theta_{n+1}\Delta x,t_{n+1}),& \mathpzc{E}=(x_{2k}+\theta_{n}\Delta x,t_{n}),\\
     \mathpzc{W}=(x_{2k+2}+\theta_{n}\Delta x,t_{n}), & \mathpzc{S}=(x_{2k}+\theta_{n-1}\Delta x, t_{n-1}),
   \end{array}
 \end{align*}
or
\begin{align*}
   \begin{array}{ll}
     \mathpzc{N}=(x_{2k}+\theta_{n+1}\Delta x,t_{n+1}),& \mathpzc{E}=(x_{2k-2}+\theta_{n}\Delta x,t_{n}),\\
     \mathpzc{W}=(x_{2k}+\theta_{n}\Delta x,t_{n}), & \mathpzc{S}=(x_{2k}+\theta_{n-1}\Delta x, t_{n-1}),
   \end{array}
 \end{align*}
see Figure \ref{fig2}. Here $\{\theta_{n-1}, \theta_{n},\theta_{n+1} \}$ are random numbers in $(-1,1)$. Define $$R(U):=[R_1(U),R_2(U),R_3(U)],$$ where $R_j$ is the right eigenvector of Jacobian matrix $Df$ associated with the eigenvalue $\lambda_j$. Note that $R(U)$ is invertible in $\Omega$. Then we have the following interaction estimate.
\begin{thrm}
\label{thm3.1}
Define $\widetilde{U}_L(x_L,t_n):=(\widetilde{\rho}_L,\widetilde{m}_L,\widetilde{E}_L)^T,\ \widetilde{U}_R(x_R,t_n):=(\widetilde{\rho}_R,\widetilde{m}_R,\widetilde{E}_R)^T$ where $x_L=x_{2k-2}+\th_n\D x, \ x_R=x_{2k+2}+\th_n\D x$. Also let
$U_M(x_M,t_n):=(\rho_M,m_M,E_M)^T$ denote the intermediate state where $x_M=x_{2k}+\th_n\D x$. Let $U_L$ and $U_R$ be the constant states in $\widetilde{\Omega}$, which can be expressed as $U_L=\widetilde{U}_L+\widebar{U}_L$, $U_R=\widetilde{U}_R+\widebar{U}_R$ where
\begin{eqnarray}
\label{aproxperturb}
&&\begin{split}
&\widebar{U}_L=(S_L-I_3)\widetilde{U}_L, \quad S_L=S(x_L,\D t,\widetilde{U}_L),\\
&\widebar{U}_R=(S_R-I_3)\widetilde{U}_R, \quad S_R=S(x_R,\D
t,\widetilde{U}_R).
\end{split}
\end{eqnarray}
Suppose that the classical wave strengths of the generalized {\rm(}in-coming{\rm)} waves across the boundaries $\mathpzc{WS}$ and $\mathpzc{SE}$ of $\Gamma_{k,n}$ are, respectively,
\begin{equation}
\label{3.3}
\a(U_M ;\widetilde{U}_L, x_L,  t_{n-1})=(\a_1,\a_2,\a_3) \quad\text{and}\quad
\beta (\widetilde{U}_R ;U_M, x_R,  t_{n-1})=(\beta_1,\beta_2,\b_3),
\end{equation}
and that the classical wave strength of the generalized {\rm(}out-going{\rm)} wave across the boundary $\mathpzc{WNE}$ is
\begin{equation}
\label{3.4}
\varepsilon(U_R;U_L,x_M,t_n)=(\varepsilon_1,\varepsilon_2,\ve_3),
\end{equation}
see Figure \ref{fig2}. Then, there exist constants $C',C''$ depending on $k,n$ such that
\begin{equation}
\label{3.6} |\varepsilon|\le(1+\z_{k,n}\D
t)(|\a|+|\beta|)+C'D(\a,\beta)+C''\D t\D x+O(1)(\D t)^3, \quad
|\a|+|\beta|\rightarrow 0,
\end{equation}
where
$$
\z_{k,n}=\frac{1}{2}\big(\frac{7-\gamma}{3}{h_1} \tilde{u}+
\frac{4}{3}
{h_2} \tilde{u}-\frac{\gamma(\gamma-1)}{\tilde{\rho}
\tilde{c}^2}\beta q\big)(x_{2k}+\th_n\D x, t_n).
$$
\end{thrm}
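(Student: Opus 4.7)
The plan is to reduce $|\varepsilon|$ to the classical Glimm-Goodman interaction bound plus corrections coming from the $O(\Delta t)$ perturbations $\overline{U}_L$ and $\overline{U}_R$ recorded in \eqref{1stubar}. Using Lax's wave-curve parametrization of the outgoing generalized Riemann solution $\mathcal{R}_G(U_R;U_L,x_M,t_n)$, I would first write
\[
 \varepsilon = R(U_M)^{-1}(U_R-U_L)+O(1)|U_R-U_L|^2,
\]
where $R(U_M)=[R_1,R_2,R_3](U_M)$ and the quadratic remainder absorbs the curvature of each shock/rarefaction curve.

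I would then split $U_R-U_L=(\widetilde{U}_R-\widetilde{U}_L)+(\overline{U}_R-\overline{U}_L)$ and treat the two pieces separately. For the classical piece, the three states $\widetilde{U}_L,U_M,\widetilde{U}_R$ are joined by the incoming waves $\alpha$ and $\beta$, so the standard Glimm-Goodman estimate---re-derived in the Euclidean norm so that $|\alpha|$ reflects the arc length of the wave curve---yields $|\alpha|+|\beta|+C'D(\alpha,\beta)$. For the perturbation piece, equation \eqref{1stubar} gives $\overline{U}=G(x,\widetilde{U})\Delta t+O((\Delta t)^2)$ with $G$ the source vector in \eqref{1.2.0}, and Taylor-expanding around $(x_M,U_M)$ produces
\[
 \overline{U}_R-\overline{U}_L = \bigl[G_U(x_M,U_M)(\widetilde{U}_R-\widetilde{U}_L)+G_x(x_M,U_M)(x_R-x_L)\bigr]\Delta t + O((\Delta t)^2)(|\alpha|+|\beta|+\Delta x).
\]
The $G_x$ piece together with the higher-order remainder collects into $C''\Delta t\Delta x + O((\Delta t)^3)$, while the $G_U$ piece, once multiplied by $R(U_M)^{-1}$, acts as a linear perturbation of the classical outgoing wave vector.

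Passing to the Euclidean norm, if $v$ denotes the classical outgoing wave vector and $M=R(U_M)^{-1}G_U(x_M,U_M)R(U_M)$, then $|v+\Delta t\,Mv|\le |v|\bigl(1+\Delta t\,\langle v,Mv\rangle/|v|^2\bigr)+O((\Delta t)^2)$, and bounding the Rayleigh quotient by the largest eigenvalue of the symmetric part $(M+M^{T})/2$ yields a scalar amplification factor $1+\zeta_{k,n}\Delta t$. The contact field contributes nothing because $\nabla\lambda_2\cdot R_2=0$ renders the 2-family transparent to the source, so only the 1- and 3-family projections survive. The main obstacle is the concrete bookkeeping needed to identify the explicit coefficient
\[
 \zeta_{k,n}=\tfrac{1}{2}\bigl(\tfrac{7-\gamma}{3}h_1\widetilde{u}+\tfrac{4}{3}h_2\widetilde{u}-\tfrac{\gamma(\gamma-1)}{\widetilde{\rho}\widetilde{c}^2}\beta q\bigr),
\]
which requires inserting the explicit eigenvectors $R_i(U)$, the Jacobian $B=G_U$ displayed just after \eqref{2.8}, and simplifying via the sonic-state identity $\widetilde{c}^2=\gamma(\gamma-1)(\widetilde{E}/\widetilde{\rho}-\widetilde{u}^2/2)$ to eliminate the enthalpy $\widetilde{H}$. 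All remaining error contributions---the averaging error from \eqref{2.8.11}, the quadratic wave-curve curvature, and the cross terms from $D(\alpha,\beta)$---are absorbed into the $C'D(\alpha,\beta)$ and $O((\Delta t)^3)$ terms of the stated bound.
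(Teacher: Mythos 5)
Your overall strategy is the same as the paper's: expand the incoming and outgoing jumps along Lax wave curves, identify an amplification matrix of the form $I_3+O(\Delta t)$ acting on $(\a+\b)^T$, and extract $\zeta_{k,n}$ from a Euclidean-norm (eigenvalue) computation, with the cross terms feeding $C'D(\a,\b)$ and the $x$-derivative of the source feeding $C''\Delta t\Delta x$. Your treatment of $\overline{U}_R-\overline{U}_L$ also matches the paper's: the combination $(S_M-I_3)+\Psi$ appearing in \eqref{3.17-2}--\eqref{3.21} is exactly $\mathscr{F}_U-I_3=G_U\Delta t+O((\Delta t)^2)$, so your $R^{-1}G_UR\,\Delta t$ reproduces that portion of the paper's matrix $\Phi(U_M)$.

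The gap is in your very first display. The outgoing wave curves are based at $U_L=\widetilde U_L+\overline U_L$, so $U_R-U_L=\sum_i\varepsilon_iR_i(U_L)+O(|\varepsilon|^2)$, and replacing $R_i(U_L)$ by $R_i(U_M)$ produces, besides the $\a$-dependent corrections that go into $D(\a,\b)$, the term $\bigl(\sum_i\varepsilon_i\nabla R_i\bigr)\overline U_L$ (see \eqref{3.9}). Since $\overline U_L=(S_L-I_3)\widetilde U_L=O(\Delta t)$ and $\varepsilon=\a+\b+\dots$, this is an $O(\Delta t)(|\a|+|\b|)$ contribution --- precisely the order at which $\zeta_{k,n}$ lives --- and it cannot be absorbed into $O(1)|U_R-U_L|^2$ or into $O((\Delta t)^3)$. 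In the paper it survives as the matrix $\overline{D}(U_M)$ of \eqref{3.18.1}, which enters $\Phi=R^{-1}(S_M+\Psi)R-\overline{D}$ in \eqref{3.21} and therefore shifts the eigenvalues of $\Phi\Phi^T$ at order $\Delta t$. Your $M=R^{-1}G_UR$ omits $\overline{D}$, so the Rayleigh-quotient bound you propose would generically yield a different coefficient than the stated $\zeta_{k,n}$; since the entire point of the theorem is the sign of $\zeta_{k,n}$ under (A$_3$), this is not a harmless constant that can be waved through. Relatedly, your claim that the contact family is ``transparent to the source'' because $\nabla\lambda_2\cdot R_2=0$ is unfounded: genuine nonlinearity has nothing to do with how $G_U$ and $\nabla R_i\cdot\overline U_L$ mix the characteristic families, and the paper's computation in fact shows $\Phi\Phi^T$ has the triple eigenvalue $1+\kappa_M\Delta t$, i.e.\ all three families are amplified equally at leading order.
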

\begin{fig}
\label{fig2}
\includegraphics[height=4cm, width=7cm]{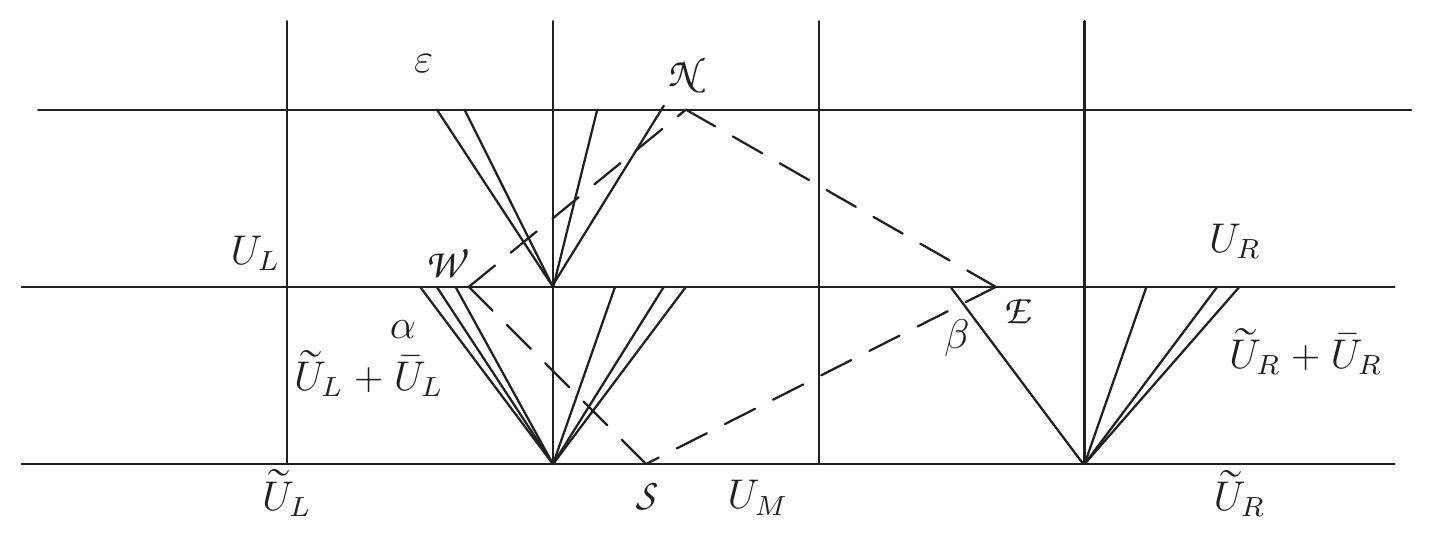}\quad
\includegraphics[height=4cm, width=7cm]{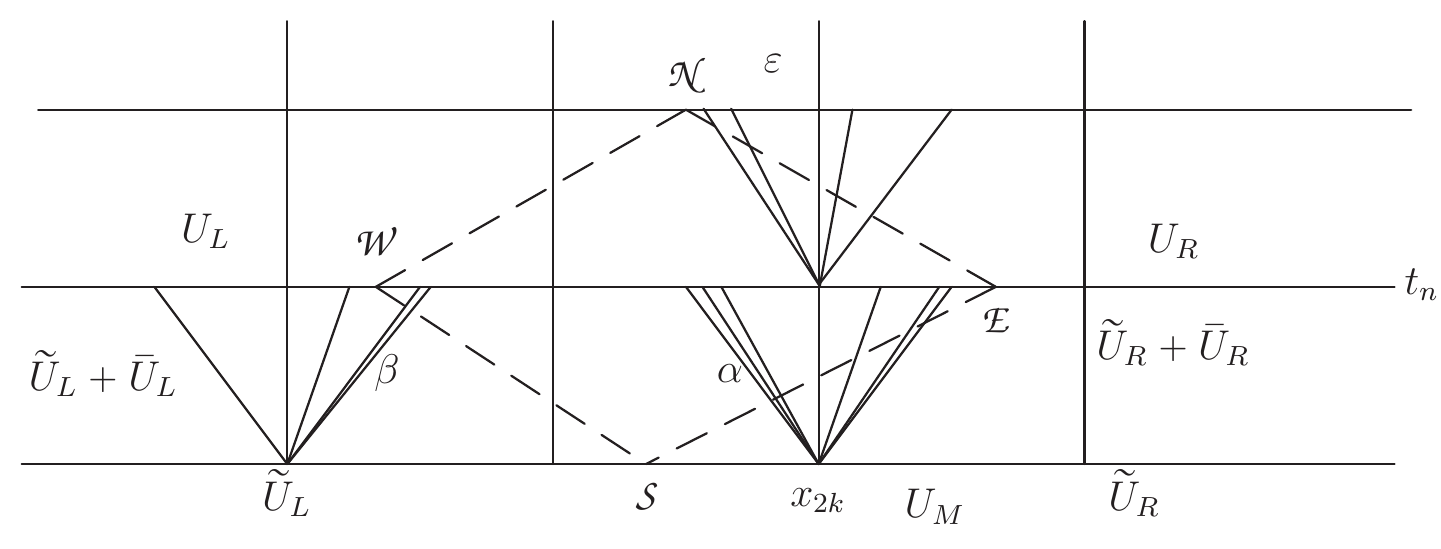}\\
Figure 2. Classical wave strengths in the diamond region $\Gamma_{k,n}$.\medskip
\end{fig}

\begin{proof}
First, following the results of \cite{SM}, we have
\begin{equation}
\label{3.7}
U_R-U_M=\overline{U}_R+\sum_{j=1}^3\beta_jR_j+ \sum_{j\leqslant
i}\beta_j\beta_i\big(R_j\cdot\nabla R_i\big)\left(1-\frac{\delta_{ij}}{2}\right)+O(1)|\beta|^3,
\end{equation}
\begin{equation}
\label{3.8}
U_L-U_M=\overline{U}_L+\sum_{j=1}^3(-\a_j)R_j+\sum_{j\geqslant
i}\a_j\a_i\big(R_j\cdot\nabla
R_i\big)\left(1-\frac{\delta_{ij}}{2}\right)+O(1)|\a|^3,
\end{equation}
where $R_j$ denotes the right eigenvector of $Df$ associated with eigenvalue $\lambda_j$, and $\delta_{ij}$ is the kronecker delta. Here, the coefficients are all evaluated at $U_M$. Similarly, we have
\begin{align}
\label{3.9}
U_R-U_L&=\sum_{i=1}^3\varepsilon_iR_i(U_L)+\sum_{j\leqslant
i}\varepsilon_j\varepsilon_i\big(R_j\cdot\nabla
R_i\big)(U_L)\left(1-\frac{\delta_{ij}}{2}\right)+O(1)|\varepsilon|^3\nonumber\\
&=\sum_{i=1}^3\varepsilon_iR_i(\widetilde{U}_L)+\sum_{j\leqslant
i}\varepsilon_j\varepsilon_i\big(R_j\cdot\nabla
R_i\big)(\widetilde{U}_L)\left(1-\frac{\delta_{ij}}{2}\right)\nonumber\\
&\quad+\left(\sum_{i=1}^3\varepsilon_i\nabla R_i(\widetilde{U}_L)\right)\widebar{U}_L+\mathcal{C}(\Delta x,|\varepsilon|),
R_i\big)\left(1-\frac{\delta_{ij}}{2}\right)+O(|\varepsilon|^3)+(\widebar{U}_R-\widebar{U}_L),
\end{align}
where $\mathcal{C}(\Delta x,|\varepsilon|)$ denotes the cubic terms of $\Delta x$ and $|\varepsilon|$. Considering $\varepsilon$ as a function of $\a,\ \beta$, we observe that $\varepsilon=0$ when $\a=\beta=S_L-S_R=0$ where $S_L,\ S_R$ are in \eqref{aproxperturb}. It implies
\begin{equation}
\label{3.9.1} \varepsilon=O(1)(\Delta x+|\a|+|\beta|).
\end{equation}
Also, by Taylor expansion of $R_i$ we have
\begin{equation}
\label{3.10}
R_i(\widetilde{U}_L)=R_i(U_M)-\sum_{j=1}^3\a_j(R_j\cdot\nabla
R_i)(U_M)+O(1)|\a|^2.
\end{equation}
It follows from \eqref{3.9}-\eqref{3.10} that
\begin{align}
\label{3.11}
U_R-U_L
&=\sum_{i=1}^3\varepsilon_iR_i(U_M)+\sum_{j\leqslant
i}\varepsilon_j\varepsilon_i(R_j\cdot\nabla
R_i)(U_M)\left(1-\frac{\delta_{ij}}{2}\right)\nonumber\\
&\quad-\sum_{\scriptstyle i,j}\varepsilon_i\a_j(R_j\cdot\nabla
R_i)(U_M)+\left(\sum_{i=1}^3(\a_i+\beta_i)\nabla
R_i(U_M)\right)\widebar{U}_L+\mathcal{C}(\Delta x,|\a|+|\beta|).
\end{align}
Next, from \eqref{2.13.1} and \eqref{aproxperturb}, we have $\widebar{U}_L=O(1)(\Delta t)=O(1)(\Delta x)$ and
\begin{align}
\label{3.14}
\widebar{U}_R-\widebar{U}_L
&=(S_R-I_3)\widetilde{U}_R-(S_L-I_3)\widetilde{U}_L \nonumber\\
&=(S_M-I_3)(\widetilde{U}_R-\widetilde{U}_L)+(S_R-S_M)\widetilde{U}_R+(S_M-S_L)\widetilde{U}_L.
\end{align}
By comparing \eqref{3.11} with \eqref{3.7}, \eqref{3.8} and using
\eqref{3.14}, we obtain
\begin{align}
\label{3.17}
\varepsilon^T &= (\a+\beta)^T+\sum_{j<i}\a_i\beta_jL_{ij}(U_M)-R^{-1}(U_M)\left(\sum_{i=1}^3(\a_i+\beta_i)\nabla R_i(U_M)\right)(S_L-I_3)\widetilde{U}_L\nonumber\\
&\quad+R^{-1}(U_M)(S_M-I_3)(\widetilde{U}_R-\widetilde{U}_L)+R^{-1}(U_M)[(S_R-S_M)\widetilde{U}_R+(S_M-S_L)\widetilde{U}_L]\nonumber\\
&\quad+\mathcal{C}(\Delta x,|\a|+|\beta|),
\end{align}
where $R=[R_1,R_2,R_3]$, $S_M=S(x_M,\D t,\widetilde{U}_M)$ and $L_{ij}:=R^{-1}(R_i\cdot \nabla R_j-R_j\cdot \nabla R_i)$.\\

Now, we rewrite all the terms in \eqref{3.17} into the terms evaluated at state $U_M$. By Taylor expansion with respect to $\D t$ and a tedious calculation, we can write $(S_L-I_3)\widetilde{U}_L$ as
\begin{align}
\label{3.17-1}
(S_L-I_3)\widetilde{U}_L
&=(S_M-I_3)U_M+(S_M-I_3)(\widetilde{U}_L-U_M)+(S_L-S_M)\widetilde{U}_L \nonumber \\
&=(S_M-I_3)U_M+O(1)\osc\{\widetilde{U}\}\D t+O(1)\D t\D x,
\end{align}
where $\osc\{\widetilde{U}\}$ is the oscillation of $\widetilde{U}$ in the diamond region. Next, to reformulate the term $(S_R-S_M)\widetilde{U}_R+(S_M-S_L)\widetilde{U}_L$ in \eqref{3.17}, we first define
$$
\mathscr{F}(x,\D t,\widetilde{U}):=S(x,\D t,\widetilde{U})\widetilde{U}.
$$
Then, we have
\begin{align}
\label{3.17-2}
&(S_R-S_M)\widetilde{U}_R+(S_M-S_L)\widetilde{U}_L \nonumber\\
&=\mathscr{F}(x_R,\D t,\widetilde{U}_R)-\mathscr{F}(x_L,\D t,\widetilde{U}_L)-S_M(\widetilde{U}_R-\widetilde{U}_L) \nonumber\\
&=\mathscr{F}_x(x_M,\D t,\widetilde{U}_M)\D x+(\mathscr{F}_U(x_M,\D t,\widetilde{U}_M)-S_M)(\widetilde{U}_R-\widetilde{U}_L) \nonumber\\
&=W(\widetilde{U}_M)\D t\D x+\Psi(\widetilde{U}_M)(\widetilde{U}_R-\widetilde{U}_L)+\mathcal{C}(\Delta x,|\a|+|\beta|),
\end{align}
where
\begin{align}
\label{3.17-3}
&W(\widetilde{U}_M)=\Big(\widetilde{m} h'_1,\widetilde{u}\widetilde{m}(h'_1+h'_2),\widetilde{m} H(\widetilde{U}) h'_1+\widetilde{u}^2 \widetilde{m}h'_2+\beta q'\Big)^T,\\
&\Psi(\widetilde{U}_M)=\mathscr{F}_U(x_M,\D t,\widetilde{U}_M)-S_M.
\end{align}
According to \eqref{3.7} and \eqref{3.8}, we have
\begin{equation}
\label{3.19}
\widetilde{U}_R-\widetilde{U}_L=R(U_M)(\a+\beta)^T+O(1)(|\a_i||\a_j|+|\beta_i||\beta_j|).
\end{equation}
Therefore, applying Taylor expansion to the terms in \eqref{3.17} and using \eqref{3.17-1}, \eqref{3.19}, we arrive at
\begin{align}
\label{3.18}
\varepsilon^T &= [I_3-\widebar{D}(U_M)+R^{-1}(U_M)(S_M-I_3)R](\a+\beta)^T+\sum_{j<i}\a_i\beta_jL_{ij}(U_M) \nonumber\\
&\quad+R^{-1}(U_M)\mathscr{F}_x(x_M,\Delta t,U_M)\D
x+R^{-1}(U_M)\Psi(U_M)(\widetilde{U}_R-\widetilde{U}_L)+\mathcal{C}(\Delta
x,|\a|+|\beta|),
\end{align}
where
\begin{align}
\label{3.18.1} \widebar{D}(U_M) :=R^{-1}\cdot[\nabla
R_1\cdot(S_M-I_3)U_M,\nabla R_2\cdot(S_M-I_3)U_M,\nabla
R_3\cdot(S_M-I_3)U_M](U_M).
\end{align}
Define
\begin{align}
\label{3.21}
\Phi(U_M):=R^{-1}(U_M)(S_M+\Psi(U_M))R(U_M)-\widebar{D}(U_M).
\end{align}
Then, by \eqref{3.19}, \eqref{3.21} we can rewrite \eqref{3.18} as
\begin{align}
\label{3.18.2} \varepsilon^T &=
\Phi(U_M)(\a+\beta)^T+\sum_{j<i}\a_i\beta_jL_{ij}(U_M)+R^{-1}(U_M)W(U_M)\D t\D
x+\mathcal{C}(\Delta x,|\a|+|\beta|).
\end{align}
\\
After a tedious calculation, we obtain the eigenvalues of $\Phi\cdot \Phi^T(U_M)$ are $\m_1=\m_2=\m_3=1+\kappa_M \Delta t$ where
\begin{equation}
\label{3.26.1} \kappa_M=\frac{7-\gamma}{3}{h_1}_M \tilde{u}_M+
\frac{4}{3} {h_2}_M
\tilde{u}_M -\frac{\gamma(\gamma-1)}{\rho_M c_M^2}\beta q_M.
\end{equation}
So, the 2-norm of $\Phi$ evaluated at $x=x_M$ is
\begin{equation}
\label{3.26} \|\Phi\|_2=\sqrt{1 + \kappa_M \Delta t}\leq 1 +
\frac{\kappa_M}{2} \Delta t.
\end{equation}
Finally, choose
\begin{equation}
\label{3.26.2} \z_{k,n}=\frac{\kappa_M}{2}, \quad C'=|Lij(U_M)|,
\quad C''=|R^{-1}(U_M)\cdot W(U_M)|,
\end{equation}
where $\kappa_M$ is in \eqref{3.26.1} and $W(U_M)$ is in \eqref{3.17-3}. Then, by \eqref{3.18.2}, \eqref{3.26} we establish \eqref{3.6}. The proof is complete.
\end{proof}

\begin{remark}
\label{rem3.2}
{\rm(}1{\rm)} When the duct is uniform {\rm(}$h_1=0${\rm)} and the effects of friction and heat {\rm(}$h_2=\beta q=0${\rm)} are ignored, inequality \eqref{3.6} is reduced to Glimm's estimate \cite{GL}:
\begin{equation}
\label{3.28.0} |\varepsilon| \leq
|\gamma|+|\beta|+C_0D(\gamma,\beta)+\mathcal{C}(\Delta
x,|\gamma|+|\beta|).
\end{equation}
{\rm(}2{\rm)} If the global velocity of gas is positive, then by assumption {\rm(}A$_3${\rm)} we have
\begin{align}
\label{3.28.2} \zeta_{k,n}<\frac{1}{2}(\frac{7-\gamma}{3}{h_1}_M
\tilde{u}_M+ \frac{4}{3}{h_2}_M \tilde{u}_M)<0
\end{align}
for all $(k,n)\in \textrm{Z}\times \textrm{Z}^+$.\\
\end{remark}

Next, we provide the wave interaction estimates for cases (II) and (III). Note that the approximate solutions of boundary Riemann problems, constructed by \eqref{2.7.1}, do not match the boundary conditions. The difference between the boundary value of approximate solutions and boundary data in each time step is given in \eqref{2.14}. We consider such difference as the wave strength of a zero-speed wave attached on the boundary $x=x_B$. Here, we extend the Goodman's type of wave interaction estimates for our approximate solution $S(t,x, \widetilde{U}_{\theta, \Delta x})\widetilde{U}_{\theta, \Delta x}$. The boundary wave interaction estimates for supersonic boundary condition is the same as in Theorem \ref{thm3.1}. Here we focus of the estimate for subsonic boundary condition case.\\

To obtain the desired estimate in the following theorem, we first give some notations. First, let $U_B^n:=\widetilde{U}_B^n+\widebar{U}_B^n$ denote the approximate solution of $\CMcal{BR}_G(x_B,t_n)$ as constructed by \eqref{2.7.1}, also define the projected boundary data in the $n$-th time strip by
$$
\widehat{U}_B^n:=(\r(x_B,t_n),m(x_B,t_n),E_B^n)^T=(\r_B^n,m_B^n,E_B^n)^T,
$$
where $E_B^n$ is the third component of $U_B^n$. Then, we treat the difference between $U_B^n$ and $\widehat{U}_B^n$ on the boundary as an $\alpha_0$ wave with wave strength
$$
\a_0:=|R^{-1}(\widetilde{U}_B^n)\cdot(U_B^n-\widehat{U}_B^n)|.
$$
Let $\beta_1$ be the 1-wave of generalized Riemann problem $\CMcal{BR}_G(x_B+2\Delta x,t_n)$. If the speed of $\beta_1$ is positive, then it will not interact all the waves in $\CMcal{BR}_G(x_B,t_n)$ in the triangular or pentagonal regions, which means that such wave interaction is trivial. So, without loss of generality, we only consider the case that the speed of $\beta_1$ is negative, meaning that the boundary data $\widehat{U}_B$ is subsonic and the speeds of 2 and 3-wave are all positive. Similarly, we define $U_B^{n+1}:=\widetilde{U}_B^{n+1}+\widebar{U}_B^{n+1}$ as the approximate solution of $\CMcal{BR}_G(x_B,t_{n+1})$. We also define
\begin{eqnarray*}
&&(\widehat{U}_B^n,U_M^n):=[(\widehat{U}_B^n,U_B^n,U_Z^n,
U_M^n)/(\a_0,\a_2,\a_3)],\\
&&(\widehat{U}_B^{n+1},U_R^{n+1}):=[(\widehat{U}_B^{n+1},{U}_B^{n+1},{U}_Z^{n+1},
U_R^{n+1})/(\varepsilon_0,\varepsilon_2,\ve_3)].
\end{eqnarray*}
It means that $(\widehat{U}_B^n,U_M^n)$ is the combined profile of $\a_0$ wave and the approximate solution of $\CMcal{BR}_G(x_B,t_n)$, and $(\widehat{U}_B^{n+1},U_R^{n+1})$ is defined similarly. Note that $(U_M^n,U_R^n):=[(U_M^n,U_R^n)/(\beta_1)]$ is the 1-wave of $\CMcal{R}_G(x_2,t_n)$ on the right of $(\widehat{U}_B^n,U_M^n)$ in $n$-th time strip, see Figure \ref{fig3}.
\begin{fig}
\label{fig3}
\includegraphics[scale=0.7]{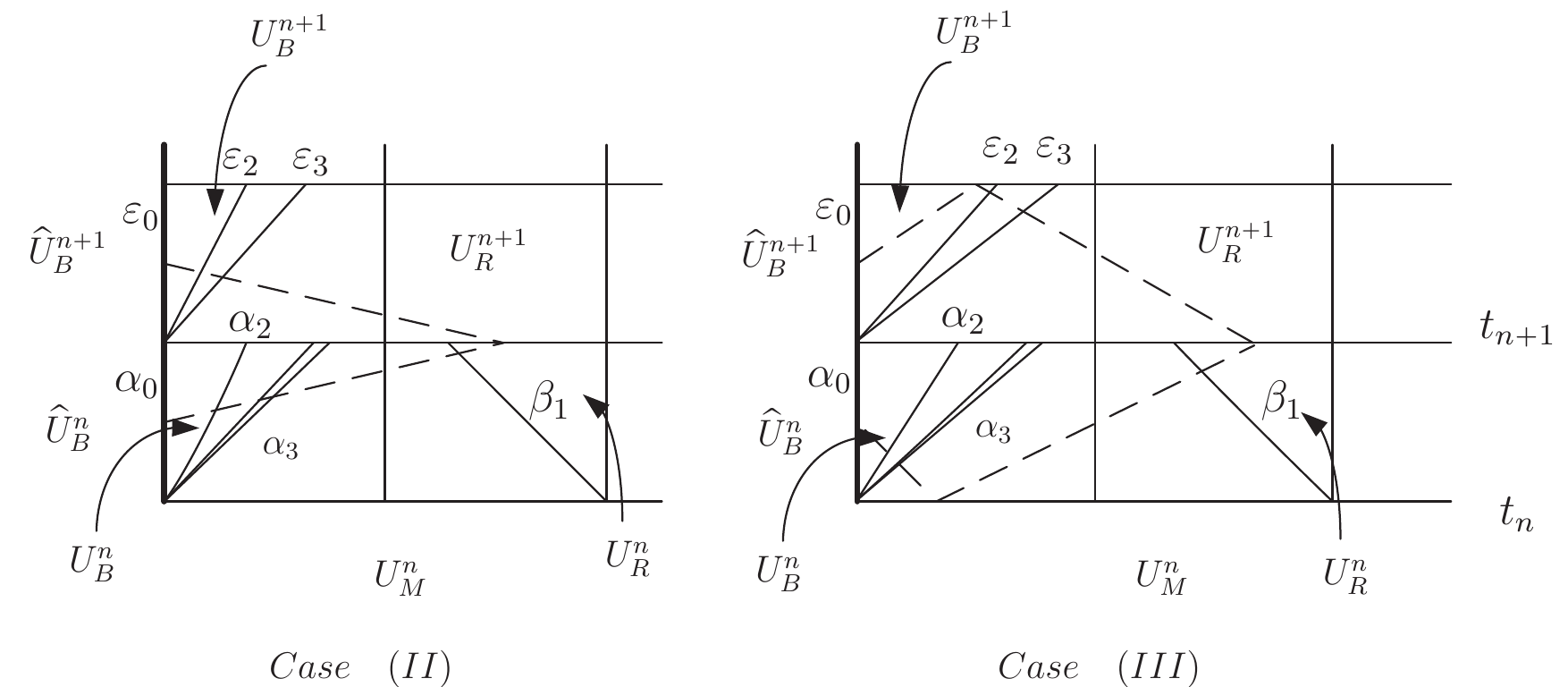}\\
Figure 3. Wave strengths in the region near the boundary $x=x_B$.\medskip
\end{fig}

We have the following wave interaction estimate for type (II) and type (III).
%

\begin{thrm}
\label{thm3.3}
{\rm(Boundary interaction estimate)} There exists a constant $C$ such that
\begin{align}
\label{4.26b}
|\varepsilon| &\leq |\a+\beta_1\mathbf{1}| +
C\Big(\sum_{App}|\a_i||\beta_1|+|\beta_1|+|\rho_B^{n+1}-\rho_B^n|+|m_B^{n+1}-m_B^n|\Big),
\end{align}
where $\mathbf{1}=(1,1,1)$.
\end{thrm}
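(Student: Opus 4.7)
The plan is to adapt the Glimm--Goodman interaction estimate of Theorem~\ref{thm3.1} to the subsonic-boundary geometry of Figure~\ref{fig3}. The wave $\a_0$ encodes the mismatch between the approximate solution $U_B^n$ and the projected boundary datum $\widehat{U}_B^n$ at level $t_n$ and plays the role of a zero-speed ``wave'' entering the boundary Riemann problem on equal footing with $\a_2,\a_3$ through the $R^{-1}$-projection. The subsonic assumption guarantees that the only incoming wave from the right is $\b_1$, since the 2- and 3-waves of $\mathcal{R}_G(x_2,t_n)$ have positive speed and escape the boundary region; thus the interaction to estimate is simply the reflection of $\b_1$ off the boundary configuration carrying $\a$, together with the update of the boundary datum from $\widehat{U}_B^n$ to $\widehat{U}_B^{n+1}$.

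First I would lay down the state-change identities along each wave curve, in the spirit of \eqref{3.7}--\eqref{3.8}: $U_R^n - U_M^n = \b_1 R_1(U_M^n) + O(|\b_1|^2)$ for the incoming 1-wave; the staircase $\widehat{U}_B^n \to U_B^n \to U_Z^n \to U_M^n$ parametrised by $\a_0,\a_2,\a_3$; and the analogous staircase $\widehat{U}_B^{n+1} \to U_B^{n+1} \to U_Z^{n+1} \to U_R^{n+1}$ parametrised by $\varepsilon_0,\varepsilon_2,\varepsilon_3$. To tie the two time levels together I use the representation $U=S(x,\D t,\widetilde U)\widetilde U$ together with the bound \eqref{2.13.1}, yielding $U_R^{n+1}=U_R^n+O(\D t)$ along the vertical segment above the $\b_1$-wave, and $\widehat{U}_B^{n+1} - \widehat{U}_B^n = (\r_B^{n+1}-\r_B^n,\,m_B^{n+1}-m_B^n,\,O(\D t))^T$ directly from the definition of the projected boundary datum.

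After subtracting the two staircase identities, re-evaluating all eigenvectors at the common reference point $U_M^n$ via Taylor expansion (mirroring the passage from \eqref{3.9} to \eqref{3.11}) and inverting with $R^{-1}(U_M^n)$, one arrives at an identity of the schematic form
\begin{equation*}
\varepsilon^T \;=\; \a^T \;+\; \b_1\,R^{-1}(U_M^n)R_1(U_M^n) \;+\; R^{-1}(U_M^n)\bigl(\widehat{U}_B^{n+1}-\widehat{U}_B^n\bigr) \;+\; \mathcal{Q},
\end{equation*}
where $\mathcal Q$ collects quadratic terms in $\a,\b_1$ of the Glimm type (hence bounded by $\sum_{App}|\a_i||\b_1|$ on approaching pairs) together with $O(\D t)$ corrections. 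Although $R^{-1}R_1$ is literally $(1,0,0)^T$ at a single base point, re-expressing the coefficient around the boundary state $\widehat U_B^n$ rather than $U_M^n$ mixes the $\b_1$-component into all three slots with an $O(|\a|)$ error per slot; the leading ``$(1,1,1)$'' part is precisely what the term $\b_1\mathbf{1}$ in \eqref{4.26b} records, while the residual $O(|\a||\b_1|)$ is absorbed in $\sum_{App}|\a_i||\b_1|$ and the still-unmatched portion of $\b_1$ is absorbed in the lone $C|\b_1|$ term. The middle contribution delivers the $|\r_B^{n+1}-\r_B^n|+|m_B^{n+1}-m_B^n|$ terms directly, up to a bounded multiplier $R^{-1}(U_M^n)$.

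The hard part is the careful bookkeeping of the reflection at a subsonic boundary: the outgoing triple contains the mismatch coordinate $\varepsilon_0$, whose meaning relies on the entropy-selection condition $(\mathcal{E})$ rather than a genuinely nonlinear wave curve, so verifying that its $\b_1$-coefficient fits the pattern ``$1+O(|\a|)$'' required above demands a direct first-order expansion of the boundary Riemann solver at the sonic reference state $U_*$ and a check that the $\a_0$-axis is non-degenerate for $\widehat U_B^n$ close to $\mathcal T$. Once this is in place, the remaining manipulations are linear-algebraic and parallel those in the proof of Theorem~\ref{thm3.1}, simplified by the absence of $\b_2,\b_3$ on the incoming side (so no $\b\cdot\b$ quadratic term appears).
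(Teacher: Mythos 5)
Your overall strategy --- two staircase decompositions at levels $t_n$ and $t_{n+1}$, subtraction, Taylor re-expansion at a common base point, and absorption of the boundary-datum update into $|\rho_B^{n+1}-\rho_B^n|+|m_B^{n+1}-m_B^n|$ --- is exactly the Goodman-type reflection argument that the paper itself invokes (its ``proof'' is a one-line citation to Goodman's thesis, Hong--Su, and Huang--Chou--Hong--Yen). So in spirit you are on the paper's route. However, the central identity you write down is structurally wrong at the one point that actually matters for a boundary estimate. At a subsonic boundary the outgoing families are $(\ve_0,\ve_2,\ve_3)$: the $0$-wave is the $O(\Delta t)$ source-induced mismatch between $U_B^{n+1}$ and $\widehat U_B^{n+1}$, and there is \emph{no outgoing $1$-wave}, precisely because the $1$-family has negative speed and exits through $x=x_B$. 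Hence the incoming contribution $\b_1 R_1(U_M^n)$ cannot be decomposed by simply applying $R^{-1}(U_M^n)$ against the full eigenbasis $\{R_1,R_2,R_3\}$; writing $\ve^T=\a^T+\b_1 R^{-1}R_1+\cdots$ with $R^{-1}R_1=(1,0,0)^T$ would dump all of $\b_1$ into the $\ve_0$ slot, which is impossible since $\ve_0=O(\Delta t)$ by construction (cf.\ \eqref{2.14.0}). The correct step is to solve the boundary-reflection linear system: the unknowns are $\ve_2$, $\ve_3$ and the free third component $E_B^{n+1}$ of the boundary trace, constrained by matching the prescribed $(\rho_B^{n+1},m_B^{n+1})$ and by connecting to $U_R^{n+1}=U_M^n+\b_1R_1(U_M^n)+O(\D t)+O(|\b_1|^2)$. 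Invertibility of that $3\times3$ system (nondegeneracy of $\{\partial_{E_B},R_2,R_3\}$ near $\Omega$, which is where condition $(\mathcal E)$ and the exclusion of zero-speed $1$-shocks at $x=x_B$ enter) is what yields bounded reflection coefficients $\ve_i=\a_i+\kappa_i\b_1+O(|\a||\b_1|)+\cdots$.

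Relatedly, your justification of the $\b_1\mathbf{1}$ term does not work: re-basing the eigenvectors from $U_M^n$ to $\widehat U_B^n$ perturbs decomposition coefficients only by $O(|\a|)$, so it would leave the leading coefficient vector at $(1,0,0)$, not $(1,1,1)$; your stated mechanism contradicts your stated conclusion. The honest reading of \eqref{4.26b} is that $\mathbf{1}$ is merely a bookkeeping device compatible with the Glimm functional (which carries a separate $K_1|\b_1|$ term in $L(J)$), and any \emph{bounded} reflection coefficients $\kappa_i$ are absorbed into $|\a+\b_1\mathbf 1|+C|\b_1|$. Your final inequality therefore survives once the reflection system above is set up and shown invertible --- but that invertibility is exactly the step you defer as ``the hard part,'' and without it the schematic identity you propose does not produce the estimate.
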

\begin{proof}
The statement follows from the similar argument to the proof of Theorem 3.3 in \cite{G1, HS, HCHY}.
\end{proof}

Note that if $\beta_1$ is not the incoming wave of boundary triangle region, then \eqref{4.26b} is reduced by
\begin{align}
\label{4.36.1b}
|\varepsilon| &\leq |\a|+C\big(|\rho_B^{n+1}-\rho_B^n|+|m_B^{n+1}-m_B^n|\big).
\end{align}



\section{Global existence of the entropy solution}
\setcounter{equation}{0}
\label{sec4}

In this section, we prove the stability and consistency of the generalized Glimm scheme, and the entropy inequality, so that the global existence of the transonic solutions for \eqref{1.2.2} is established. The stability of the scheme, which is the core of generalized Glimm method, will be obtained by showing the non-increasing of Glimm functionals and the uniform boundedness of BV-norm of perturbation terms $\{\widebar{U}_{\th,\D x}\}$ in the approximate solutions $\{U_{\th,\D x}\}$ for \eqref{1.2.2}.

\subsection{The stability of generalized Glimm scheme}

In this subsection, we will prove the stability of the scheme, which leads to the compactness of subsequences of the approximate solutions for \eqref{1.2.2}. To start, let $U_{\theta, \Delta x}$ be the approximate solution of \eqref{1.2.2} constructed by the generalized Glimm scheme in Section 3.1. Note that $U_{\theta,\Delta x}$ can be decomposed as
\begin{equation}
\label{aproxsol3}
U_{\theta, \Delta x}=\widetilde{U}_{\theta, \Delta x}+\widebar{U}_{\theta, \Delta x},
\end{equation}
where $\widetilde{U}_{\theta, \Delta x}$ is the approximate solution by solving homogeneous conservation laws in each time step and $\widebar{U}_{\theta, \Delta x}$ is the perturbation term. We first show that $\widetilde{U}_{\theta, \Delta x}$ and its total variation are uniformly bounded. From the results of \cite{GL,SM}, it can be accomplished by showing that the Glimm functional, which will be defined later, is non-increasing in time.\\

Let $J$ be a mesh curve, $J'$ be the immediate successor of $J$ and $\Gamma_{k,n}$ be the diamond region with center $(x_{2k},t_n)$ and enclosed by $J$ and $J'$. Then, the Glimm functional $F$ for $\widetilde{U}_{\theta ,\Delta x}$ over $J$ is defined as
\begin{equation}
\label{glimfunl}
F(J):=L(J)+KQ(J),
\end{equation}
where $K$ is a positive constant which will be decided later and
\begin{align*}
L(J)&:=\sum  \{ |\a_i| : \a_i \  \mbox{crosses} \ J \}+K_1\Big(|\beta_1|+\sum_{k\in B(J)}l_B^k
 \Big),\\
Q(J)&:= \sum \{ |\a_i||\a_{i'}|: \a_i,\a_{i'} \ \mbox{cross} \  J \ \mbox{and approach} \},\\
l_B^n&:=\left\{\begin{array}{ll}
|\rho_B^{n+1}-\rho_B^n|+|m_B^{n+1}-m_B^n|, & \text{(subsonic case)} \\
|\rho_B^{n+1}-\rho_B^n|+|m_B^{n+1}-m_B^n|+|E_B^{n+1}-E_B^n|, & \text{(supersonic case)}.
\end{array}\right.
\end{align*}
Here both constants $K>1$ and $K_1>1$ will be decided later, $B(J):=\{n:P_{x_B,n}=(x_B,t_n+\Delta t/2)\in J\}$, $l_b^n$ is evaluated at the mesh point $P_{x_B,n}$, and the presence of $|\beta_1|$ depends on $\beta_1$ crosses $J$ and locates in some boundary triangle region, see Figure \ref{fig3}.\\

First, we recall the domain $\Omega$ for some constant $r,r^*$ in \eqref{1.2.3} and the case that $J$, $J'$ differ in a diamond region away from boundary. From Theorem \ref{thm3.1}, let $Q(\Gamma_{k,n}):=D(\a,\beta)$ be the wave interaction potential associated with $\a$ and $\beta$ and let
$$
C_1=\max\limits_{U\in\Omega}\Big|\sum\limits_{j<i}L_{ij}(U)\Big|\ge C'_{k,n},\ \forall\,k,n.
$$
By condition $(A_2)$, we have $u_0(x)>0,\ \forall\,x\in[x_B,\infty)$. Following \eqref{3.6}, we obtain
 \begin{align}
\label{3.32}
L(J')-L(J)&\leq C_1Q(\Gamma_{k,n})-\l_*^{-1}\z_{k,n}(|\a|+|\beta|)\D x
+\l_*^{-1}C''_{k,n}(\Delta x)^2+O(1)(\Delta x)^3,\\
\label{3.33}
Q(J')-Q(J)&\leq -Q(\Gamma_{k,n})+L(J)[C_1Q(\Gamma_{k,n})-\l_*^{-1}\z_{k,n}(|\a|+|\beta|)\Delta{x}\nonumber\\
&\quad+\l_*^{-1}C''_{k,n}(\Delta x)^2+O(1)(\Delta x)^3],
\end{align}
where $C''_{k,n},\z_{k,n}$ as in \eqref{3.6} which defined in Theorem \ref{thm3.1}. Therefore, by \eqref{glimfunl}, \eqref{3.32} and \eqref{3.33}, we have
\begin{align}
\label{3.34}
F(J')-F(J)&\leq -[K-C_1-KC_1L(J)]Q(\Gamma_{k,n})-\l_*^{-1}\z_{k,n}(|\a|+|\beta|)\D x\nonumber\\
&\quad+[1+KL(J)]\l_*^{-1}C''_{k,n}(\Delta x)^2+O(1)(\Delta x)^3.
\end{align}
If $K$ satisfies $2C_1<K\le\e/L(J)$ for some $0<\e<1/2$, then we have
\begin{align}
\label{3.38.c}
F(J)=L(J)+KQ(J)\leq L(J) + K L^2(J)\leq(1+\e)L(J),
\end{align}
and \eqref{3.34} gives an estimate
\begin{align}
\label{3.36}
F(J')&<F(J)-\l_*^{-1}\z_{k,n}(|\a|+|\beta|)\D x+\l_*^{-1}(1+\e)C''_{k,n}(\Delta x)^2+O(1)(\Delta x)^3,
\end{align}
By \eqref{3.38.c} and adding up recursive relation \eqref{3.36} over all $k$, use the condition $(A_2)$ that $\int_{x_B}^{\infty}h'_i dx\leq a^*$, i=1,2 and $\int_{x_B}^{\infty} q' dx\leq a^*$, we obtain
\begin{align}
\label{3.38}
F(J_{2})&< F(J_1)-\l_*^{-1}C_2L(J_1)\Delta x+\l_*^{-1}(1+\e)a^*C_3\Delta x+O(1)(\Delta x)^2,
\end{align}
where $C_2$ is give in \eqref{3.28.2}, $C_3$ is give in \eqref{3.17-3} Thus, if $\D x$ sufficiently small, and \eqref{3.36}, we have
\begin{align}
\label{3.38.a}
F(J_3)&\leq F(J_{2})-\l_*^{-1}C_2L(J_2)\Delta x+\l_*^{-1}(1+\e)a^*C_3\Delta x+O(1)(\Delta x)^2 \nonumber\\
      &\leq (1-\frac{C_2}{1+\e}\l_*^{-1}\Delta x)F(J_{2})+\l_*^{-1}(1+\e)a^*C_3\Delta x+O(1)(\Delta x)^2 .
\end{align}
According to \eqref{3.38} and \eqref{3.38.a}, we obtain
\begin{align}
\label{3.38.e}
F(J_{3})&\le \Big(1-\l_*^{-1}\frac{C_2}{1+\e}\D x\Big)^2F(J_1)+\Big(1-\l_*^{-1}\frac{C_2}{1+\e}\D x\Big)(\l_*^{-1}(1+\e)a^*C_3\Delta x+O(1)(\Delta x)^2)\nonumber\\
&\quad+\l_*^{-1}(1+\e)a^*C_3\Delta x+O(1)(\Delta x)^2.
\end{align}
According to the similarly argument in the previous step, we further obtain
\begin{align}
\label{3.41}
F(J_{n})&\leq \Big(1-\l_*^{-1}\frac{C_2}{1+\e}\D x\Big)^{n-1}F(J_1)+\l_*^{-1}(1+\e)a^*C_3\Delta x\sum_{k=1}^{n-1}\Big(1-\l_*^{-1}\frac{C_2}{1+\e}\D x\Big)^{k-1} \nonumber\\
&\quad+O(1)(\Delta x)^2.
\end{align}
and thus, we get
\begin{align}
\label{3.42}
F(J_{n})&\le F(J_1)+(1+\e)^2\frac{C_3}{C_2}a^*+O(1)(\Delta x)^2.
\end{align}
and $u(x,t_n)>0,\ \forall\,x\in[x_B,\infty)$. Note that in the second inequality of \eqref{3.42}, it leads to
\begin{align}
\label{3.43}
\TV_{J}\{\widetilde{U}_{\theta,\Delta x}\}&\leq O(1)L(J)\leq O(1)F(J)\nonumber\\
&\leq (1+\e)\TV\{U_0(x)\}+(1+\e)^2\frac{C_3}{C_2}a^*+O(1)(\Delta x)^2
\end{align}
for $J_k\leq J<J_{k+1}$, $k=1,\ldots,n-1$.
\begin{align}
\label{3.43.2}
\mathcal{C}:=\frac{C_3}{C_2}a^*.
\end{align}
Next, we consider the case that $J'$ is an immediate successor of $J$ so that they only differ on boundary $P_{x_B,{n}}$. For the subsonic boundary case, following condition ($A_2$), \eqref{4.26b} and \eqref{glimfunl}, we obtain
\begin{align}
\label{4.38b}
F(J')-F(J)
&=|\varepsilon|-|\a|-|\beta_1|-K_1(|\beta_1|+l_B^k)\nonumber\\
&\quad+K|\a|(|\varepsilon|-|\a|-|\beta_1|)-K(|\a_0\beta_1|+|\a_2\beta_1|+|\a_3\b_1|),\nonumber\\
&\leq O(1)C(|\a_0\beta_1|+|\a_2\beta_1|+|\a_3\beta_1|+|\beta_1|+l_B^k)-K_1(|\beta_1|+l_B^k)\nonumber\\
&\quad+O(1)CK|\a|(|\a_0\beta_1|+|\a_2\beta_1|+|\a_3\beta_1|+|\beta_1|+l^k_B)\nonumber\\
&\quad-K(|\a_0\beta_1|+|\a_2\beta_1|+|\a_3\beta_1|)+O(\Delta x)^2\nonumber\\
&\leq (-K_1+O(1)C+O(1)CK\cdot F(J))(|\beta_1|+l_B^k)\nonumber\\
&\quad+(-K+O(1)C+O(1)CK\cdot F(J))(|\a_0||\beta_1|+|\a_2||\beta_1|+|\a_3||\beta_1|)\nonumber\\
&\quad+O(1)(\Delta x)^2\leq O(1)(\Delta x)^2
\end{align}
provided constants $K_1$, $K\geq O(1)2C$, and $ KL(J)\leq\e$. The supersonic boundary case can be estimate in similar way. Now, let $J_n$ be the mesh curve located on the time strip $T_n:=(x_B,\infty)\times[t_{n-1},t_n)$ and include the half-ray $\{x=r_b,\;t\geq t_n+\Delta t/2\}$. Also, let $\TV\{U_0(x)\}:=\TV\{\r_0(x)\}+\TV\{m_0(x)\}+\TV\{E_0(x)\}$. If $\Delta x$ and $\TV\{U_0(x)\}$ are sufficiently small, then we have
\begin{align}
\label{4.41b}
F(J_{k+1})\le F(J_{k})-\l_*^{-1}\frac{C_2}{1+\e+\e^2}(\D x)F(J_k)+O(1)(\Delta x)^2,\quad k=1,...,n.
\end{align}
Therefore, based on \eqref{4.41b} and similarly step as away from boundary, we obtain $\widetilde{U}_{\theta,\Delta x}$ is defined for $t > 0$ and $\Delta x \rightarrow 0$.

Next, we verify the total variation of the perturbation for any fixed time step is also bounded. Let us denote $S_k:=S(x_k,t,\widetilde{U}_k)$, then
\begin{eqnarray*}
\TV\{\widebar{U}\} &=& \sum_k|\widebar{U}(x_{k+1})-\widebar{U}(x_{k-1})|\le\sum_k|[(S-I_3)\widetilde{U}](x_{k+1})-[(S-I_3)\widetilde{U}](x_{k-1})| \\
                   &\le& \sum_k|(S_z-I_3)(\widetilde{U}_{k+1}-\widetilde{U}_{k-1})|+\sum_k|(S_{k+1}-S_z)\widetilde{U}_{k+1}
                         +(S_z-S_{k-1})\widetilde{U}_{k-1}|
\end{eqnarray*}
According to \eqref{3.17-2}, we obtain
\begin{eqnarray}
\label{3.45}
\TV\{\widebar{U}\} &\le& \|S_z-I_3\|\sum_k\osc\{\widetilde{U}\}+2\sum_k|W(\widetilde{U})|\D x\D t+2\|\Psi\|\sum_k\osc\{\widetilde{U}\} \nonumber\\
                    &\le& \|S_z-I_3\|\TV\{\widetilde{U}\}+ O(1)a^*\D t+\|\Psi\|\TV\{\widetilde{U}\}
\end{eqnarray}

Hence the total variation of the perturbation $\widebar{U}$ is bounded by boundedness of the total variation of $\widetilde{U}$. Because of the boundedness of the total variation for approximate solutions, the constant $\mathcal{C}$ in \eqref{3.43.2} can be easily determined by the initial-boundary data, friction and heating. By \eqref{3.43}, \eqref{3.45} and the results in \cite{DH,SM}, we obtain the following theorem.

\begin{thrm}
\label{thm4.1}
For fixed $K,\ \e$ as chosen above. Let $U_{\theta,\Delta x}$ be an approximate solution of \eqref{1.2.2}{\rm} by the generalized Glimm scheme. Then under conditions {\rm(}$A_1${\rm)}-{\rm(}$A_4${\rm)}, for any given constant state $\widecheck{U}$ there exist positive constants $d$, depending on the radius $r$ of $\Omega$, such that if
\begin{align}
\label{3.47.1}
\sup_{x \in [x_B,\infty)}|U_0(x)-\widecheck{U}|\leq\frac{r}{2}, \quad\TV\{U_0(x)\}\leq d,
\end{align}
plus the condition
\begin{align}
\label{3.47.2}
\sup_{t\in{\mathbb{R}^+}}|m_B(t)-\widecheck{m}|\leq\frac{r}{2}+(1+\e)^2\mathcal{C}
\end{align}
hold for \eqref{1.2.2} with $\mathcal{C}$ as the constant in \eqref{3.43.2}. Then $U_{\theta,\Delta x}(x,t)$ is well-defined for $t \geq 0$ and sufficiently small $\Delta x>0$. Furthermore, $U_{\th,\D x}(x,t)$ has uniform total variation bound and satisfies the following properties:
\begin{enumerate}
\item [{\rm(}i{\rm)}] $\displaystyle \|U_{\theta,\Delta{x}}-\widecheck{U}\|_{L^\infty}\leq r+(1+\e)^2\mathcal{C}$.
\item [{\rm(}ii{\rm)}] $\TV\{U_{\theta,\Delta{x}}(\cdot,t)\}\leq \dfrac{r}{2}+(1+\e)^2\mathcal{C}$.
\item [{\rm(}iii{\rm)}] $\displaystyle\int_{x_B}^{\infty}|U_{\theta,\Delta{x}}(x,t_2)-U_{\theta,\Delta{x}}(x,t_1)|dx\leq
O(1)(|t_2-t_1|+\Delta{t})$.
\end{enumerate}
\end{thrm}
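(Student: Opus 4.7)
The plan is to combine the wave interaction estimates of Theorems \ref{thm3.1} and \ref{thm3.3} with the decomposition $U_{\theta,\Delta x}=\widetilde{U}_{\theta,\Delta x}+\widebar{U}_{\theta,\Delta x}$ in \eqref{aproxsol3}, then run an inductive argument on time-steps. The existence of $U_{\theta,\Delta x}$ for all $t\ge 0$ hinges on two things: first, that the Glimm functional $F(J)$ defined in \eqref{glimfunl} stays in a range where the classical Riemann and boundary-Riemann solvers remain applicable (so the interaction estimates continue to hold), and second, that the velocity $u_{\theta,\Delta x}>0$ is preserved so that $\zeta_{k,n}<0$ in \eqref{3.28.2}, which is what actually makes $F$ dissipative. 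Both properties need to be propagated inductively from one mesh curve to the next.

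First, I would fix the constants exactly as in the lead-in to the statement: choose $K_1,K\ge O(1)2C$ large enough to absorb boundary terms in \eqref{4.38b}, choose $2C_1<K$, and then require $KL(J)\le\varepsilon<1/2$ on the initial curve $J_1$, which holds by taking $\TV\{U_0\}$ and the $a^\ast$ in $(A_2)$ small. Under this choice, the diamond-region computation \eqref{3.32}--\eqref{3.36} and the boundary computation \eqref{4.38b} give the unified recursive relation
\begin{equation*}
F(J_{k+1})\le F(J_k)-\lambda_\ast^{-1}\frac{C_2}{1+\varepsilon+\varepsilon^2}(\Delta x)F(J_k)+O(1)(\Delta x)^2\cdot\!\!\sum_{\text{sources on }J_k}\!\!(|h_1'|+|h_2'|+|q'|)\Delta x,
\end{equation*}
which, upon summing telescopically in $k$ using $(A_2)$ and a discrete Gronwall step, gives the uniform bound $F(J_n)\le F(J_1)+(1+\varepsilon)^2\mathcal{C}+O((\Delta x)^2)$ with $\mathcal{C}$ as in \eqref{3.43.2}. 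This yields conclusion (ii) for $\widetilde{U}_{\theta,\Delta x}$ via $\TV_{J_n}\{\widetilde{U}_{\theta,\Delta x}\}\le O(1)F(J_n)$.

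Second, I would transfer the bound to $\widebar{U}_{\theta,\Delta x}$ through estimate \eqref{3.45}. Since $\|S-I_3\|=O(\Delta t)$ by \eqref{2.13.1} and $|W|,\|\Psi\|$ are controlled by $(A_2)$, we get $\TV\{\widebar{U}_{\theta,\Delta x}(\cdot,t)\}\le O(1)(\TV\{\widetilde{U}_{\theta,\Delta x}(\cdot,t)\}+a^\ast)$, so summing gives the full conclusion (ii) for $U_{\theta,\Delta x}$. Conclusion (i), the $L^\infty$-bound, then follows from (ii) together with \eqref{3.47.1}, \eqref{3.47.2} by writing $U_{\theta,\Delta x}(x,t)-\widecheck{U}$ as $(U_{\theta,\Delta x}(x,t)-U_{\theta,\Delta x}(x_B^+,t))+(U_B(t)-\widecheck{U})$ and bounding the first summand by the total variation. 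Conclusion (iii), the time-Lipschitz estimate in $L^1_{\text{loc}}$, is a standard consequence of the CFL condition \eqref{2.0.0.0}, the uniform $L^\infty$-bound, and the bound on $\widebar{U}$ in \eqref{1stubar}, as in the proof of Glimm's analogue.

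The key obstacle is the simultaneous propagation of two coupled properties: the uniform bound on $F(J_n)$ and the positivity $u_{\theta,\Delta x}(x,t)>0$. The positivity is what makes $\zeta_{k,n}<0$ via $(A_3)$, which is what makes the recursion for $F(J_n)$ genuinely dissipative; but positivity itself is propagated only through the $L^\infty$ control afforded by condition $(A_4)$ together with the bound on $F(J_n)$. So I would carry out the induction on $n$ simultaneously for both statements: assume at step $n$ that $F(J_n)\le F(J_1)+(1+\varepsilon)^2\mathcal{C}$ and that $u_{\theta,\Delta x}>0$ on $J_n$; use $(A_4)$ to conclude $m_{\theta,\Delta x}>0$ on $J_{n+1}$ (hence $u>0$); then the diamond/boundary estimates of Section \ref{sec3} are applicable and upgrade the $F$-bound to step $n+1$. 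The delicate part is verifying that the additive $(1+\varepsilon)^2\mathcal{C}$ accumulated from the source never pushes the running state out of the neighborhood $\widetilde{\Omega}$ of the sonic curve where Theorem \ref{thm2.1} guarantees solvability of $\mathcal{BR}_C$; condition $(A_4)$ is precisely what furnishes the required buffer.
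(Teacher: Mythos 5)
Your proposal follows essentially the same route as the paper: the uniform bound on the Glimm functional via the diamond and boundary interaction estimates \eqref{3.6}, \eqref{4.26b} summed with a discrete Gronwall step under $(A_2)$, the transfer to the perturbation via \eqref{3.45}, conclusion (i) deduced from (ii) plus the smallness hypotheses, and (iii) from the CFL condition together with the $O(\Delta t)$ bound on $S-I_3$. Your explicit remark that the positivity of $u_{\theta,\Delta x}$ and the bound on $F(J_n)$ must be propagated by a simultaneous induction (since $(A_3)$ only yields $\zeta_{k,n}<0$ when $u>0$) is a correct and somewhat more careful articulation of what the paper leaves implicit, but it is the same argument.
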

\begin{proof}
For (i) and (ii), note that
\begin{align}
\label{3.44}
\sup_{J_n}|U_{\theta,\Delta x}-\widecheck{U}|\leq \sup |U_0(x)-U_*|+\TV_{J_n}\{U_{\theta,\Delta x}\}.
\end{align}
We choose fixed $d$ such that $(1+\e)d\le r/2$, it follows from \eqref{3.43} that if
$$
\sup|U_0(x)-\widecheck{U}|\leq \frac{r}{2},\quad\TV\{U_0(x)\}\leq d,
$$
then, for sufficiently small $\Delta x$, we have
$$
\TV_{J_n}\{U_{\th,\D x}\}\le(1+\e)\TV\{U_0(x)\}+(1+\e)^2\mathcal{C}\le\frac{r}{2}+(1+\e)^2\mathcal{C},
$$
where $\mathcal{C}$ is the constant as in \eqref{3.43.2}. Thus,
\begin{align}
\label{3.44.1}
\sup_{\scriptstyle J_n}|\widetilde{U}_{\th,\Delta x}-\widecheck{U}|\leq r+(1+\e)^2\mathcal{C}.
\end{align}
With the above choice of $K$ and $d$, we obtain that, for sufficiently small $\Delta x$, $U_{\th,\Delta x}(x,t)$ is defined on $[x_B,\infty)\times[0,\infty)$ when ($A_1$)-($A_4$) hold. In addition, $U_{\th,\Delta x}(x,t)$ and its total variation are uniformly bounded and independent of $\Delta x$.

For (iii), without loss of generality, let $t_2>t_1$, $t_0=\sup\{t\le t_1\mid t=n\D t\text{ for some }n\}$, and let $\ell=\lfloor(t_2-t_0)/\D t\rfloor+1$. According to \eqref{2.13.1}, we have that
\begin{align*}
|U_{\th,\D x}(x,t_2)-U_{\th,\D x}(x,t_1)|
&\le |U_{\th,\D x}(y,t_0)-U_{\th,\D x}(x,t_0)|+|(S(y,U_{\th,\D x}(y,t_0))^{\ell}-I)U_{\th,\D x}(y,t_0)| \\
&=|U_{\th,\D x}(y,t_0)-U_{\th,\D x}(x,t_0)|+O(1)(\D t)
\end{align*}
for some $y\in[x-\ell\D x,x+\ell\D x]$. Hence following Corollary 19.8 in \cite{SM}, we get (iii).
\end{proof}

Therefore, by Theorem \ref{thm4.1} and Oleinik's analysis in \cite{SM}, we have the following theorem for the compactness of the subsequence of $\{U_{\theta,\Delta x}\}$.

\begin{thrm}
\label{thm.4.2}
Assume that {\rm(}$A_1${\rm)}-{\rm(}$A_4${\rm)} hold. Let $\{U_{\theta ,\Delta x}\}$ be a family of approximate solutions \eqref{1.2.2} by the GGS. Then there exist a subsequence
$\{U_{\theta ,\Delta x_i}\}$ of $\{U_{\theta ,\Delta x}\}$ and a measurable function $U$ such that
\begin{enumerate}
\item [{\rm(}i{\rm)}] $U_{\theta ,\Delta x_i}(x,t)\rightarrow U(x,t)$ in $L^1_{loc}$ as
$\Delta x_i\rightarrow 0$;
\item [{\rm(}ii{\rm)}] for any continuous function $f$, we have $f(x,t,U_{\theta,\Delta x_i})\rightarrow f(x,t,U)$
in $L^1_{loc}$ as $\Delta x_i \rightarrow 0$.
\end{enumerate}
\end{thrm}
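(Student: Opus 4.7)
The plan is to deduce Theorem \ref{thm.4.2} from the uniform bounds collected in Theorem \ref{thm4.1} by a standard Helly-type selection argument combined with a diagonal procedure in time, in the spirit of Oleinik's lemma (cf.\ \cite{SM}). I would first record the three ingredients from Theorem \ref{thm4.1} that drive everything: a uniform $L^\infty$ bound on $\{U_{\theta,\Delta x}\}$ so that their values lie in a fixed compact set $\mathcal{K}\subset\mathbb{R}^3$; a uniform bound on $\TV\{U_{\theta,\Delta x}(\cdot,t)\}$ that is independent of both $t$ and $\Delta x$; and the temporal equicontinuity estimate
\[
\int_{x_B}^{\infty}\bigl|U_{\theta,\Delta x}(x,t_2)-U_{\theta,\Delta x}(x,t_1)\bigr|\,dx \leq O(1)\bigl(|t_2-t_1|+\Delta t\bigr).
\]

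Next, I would apply Helly's selection theorem slice-by-slice. Fix an enumeration $\{t_j\}_{j\in\mathbb{N}}$ of the nonnegative rationals and an exhausting sequence of intervals $[x_B,x_B+N]$. For fixed $t_j$, the spatial profiles $\{U_{\theta,\Delta x}(\cdot,t_j)\}$ are uniformly bounded and have uniformly bounded total variation on each $[x_B,x_B+N]$, so Helly's theorem extracts a subsequence converging in $L^1([x_B,x_B+N])$. A standard Cantor diagonal extraction in both $j$ and $N$ produces a single subsequence $\{\Delta x_i\}\to 0$ such that $U_{\theta,\Delta x_i}(\cdot,t_j)$ converges in $L^1_{loc}([x_B,\infty))$ to a limit $U(\cdot,t_j)$ for every rational $t_j$.

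Third, I would promote this rational-time convergence to all $t\geq 0$ using the temporal estimate above. Given any $t\geq 0$ and $N>0$, for rationals $t_j,t_k\to t$ the triangle inequality gives
\[
\int_{x_B}^{x_B+N}\bigl|U_{\theta,\Delta x_i}(x,t)-U_{\theta,\Delta x_{i'}}(x,t)\bigr|\,dx \leq \text{(rational-time } L^1 \text{ differences)} + O(1)\bigl(|t-t_j|+|t-t_k|+\Delta t\bigr),
\]
from which one concludes that $\{U_{\theta,\Delta x_i}(\cdot,t)\}$ is Cauchy in $L^1([x_B,x_B+N])$, hence converges to a function $U(\cdot,t)$. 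Fubini's theorem and the uniform $L^\infty$ bound then upgrade this pointwise-in-$t$ convergence to $L^1_{loc}([x_B,\infty)\times[0,\infty))$ convergence, establishing (i). For (ii), $L^1_{loc}$ convergence yields (after passing to a further subsequence) pointwise a.e.\ convergence $U_{\theta,\Delta x_i}(x,t)\to U(x,t)$; since the values remain in the compact set $\mathcal{K}$ and $f$ is continuous, $f(x,t,U_{\theta,\Delta x_i})\to f(x,t,U)$ pointwise a.e.\ and is uniformly bounded on $\mathcal{K}$, so the dominated convergence theorem delivers $L^1_{loc}$ convergence.

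The heart of the argument is really Theorem \ref{thm4.1}; once the BV and temporal-equicontinuity estimates are available, the rest is bookkeeping. The only mildly delicate point is arranging a \emph{single} subsequence that works simultaneously for all rationals $t_j$ and all intervals $[x_B,x_B+N]$, but this is exactly what the Cantor diagonal procedure handles. I therefore do not anticipate a substantive obstacle beyond carefully citing Theorem \ref{thm4.1} and invoking Helly together with the diagonal extraction.
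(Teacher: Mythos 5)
Your proposal is correct and follows essentially the same route as the paper, which simply invokes Theorem \ref{thm4.1} together with Oleinik's analysis in \cite{SM}; you have merely written out in full the standard Helly-plus-diagonal argument (selection at rational times, promotion to all times via estimate (iii), dominated convergence for (ii)) that the paper leaves to the cited reference.
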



Finally, we prove the global existence of entropy solutions to \eqref{1.2.2} by showing the consistency of the scheme and the entropy inequalities for weak solutions. By Theorem \ref{thm2.2} and the similar proof as in Theorem 3.6 of \cite{HCHY}, we obtain the consistency of our scheme and entropy inequalities, and which leads to the global existence results of Main Theorem.

\section{Numerical examples}
\setcounter{equation}{0}
\label{sec5}

In this section we present numerical examples for the one-dimensional nozzle flow equations with friction and heat source terms. The computational domain is set as $[1,10]$, the CFL number is taken as 5. We verify our theoretical result by showing the solution profile at $t=2$. The adiabatic index is $\g=1.4$, and the initial and the boundary densities is fixed as
$$
\r_0(x)=\left\{\begin{array}{ll}
1.1, & 1\le x\le 4 \\ 1, & 4<x\le 10,
\end{array}\right.\qquad \r_B(t)\equiv 1.1.
$$
For simplicity, we set $P_0(x)=\r_0(x),\ P_B(t)=\r_B(t)$, then the initial sound speed is fixed as $c_0(x)\equiv\sqrt{1.4}$. For the initial and the boundary velocity, we divided into two cases, subsonic-supersonic case and supersonic-subsonic case respectively. We set the initial and boundary velocities as
\begin{equation}
\label{subsup}
u_0(x)=\left\{\begin{array}{ll}
\sqrt{1.2}, & 1\le x\le 4 \\ \sqrt{1.6}, & 4<x\le 10,
\end{array}\right.\qquad u_B(t)\equiv\sqrt{1.2},
\end{equation}
or
\begin{equation}
\label{supsub}
u_0(x)=\left\{\begin{array}{ll}
\sqrt{1.6}, & 1\le x\le 4 \\ \sqrt{1.2}, & 4<x\le 10,
\end{array}\right.\qquad u_B(t)\equiv\sqrt{1.6}.
\end{equation}
Differ from our theoretical setting above, we let the heat profile as follows in our numerical simulation:
$$
q(x,t)=\frac{1}{2(t+1)}e^{-(x-5)^2}.
$$
The above setting will be used for the simulations of the examples below. The numerical examples is simulated by using MacCormak scheme to solve the homogeneous Riemann (or boundary-Riemann) problems and then multiplied by the contraction matrix \eqref{solver1} in the homogeneous solutions for each time steps.

\begin{exam}
\label{exam5.1}
{\rm(Contraction-Expansion nozzle in subsonic-supersonic case)\\
In this example, we consider the cross section area $a(x)$ is a $C^0$ function as follows
$$
a(x)=\frac{(x-5)^2}{160}+10,\quad x\in[1,10],
$$
and $a'(x)\equiv 0$ when $x\in\mathbb{R}\backslash[1,10]$. Then $h_1(x)=-\frac{2(x-5)}{x^2-10x+1625},\quad h_2(x)=-\frac{4\sqrt{10}\a}{\sqrt{x^2-10x+1625}}$.
Using the aforementioned setting, we have
$$
\max_{x\in[1,10]}\Big(\frac{7-\gamma}{3}h_1(x)u_0(x)+ \frac{4}{3}h_2(x)u_0(x) -\frac{\gamma(\gamma-1)}{\rho_0(x)c^2_0(x)}\beta q(x,0)\Big)<0
$$
for $\a>0.022028,\ \b>0$. The solution profile at $t=2$ with $\a=0.1,\ \b=1$ in grid sizes $\D x=0.1,\ 0.05,\ 0.01,\ 0.005$ for the density, velocity, energy and the Mach number are shown in Figure \ref{fig4}.
\begin{fig}
\label{fig4}
\includegraphics[scale=0.265]{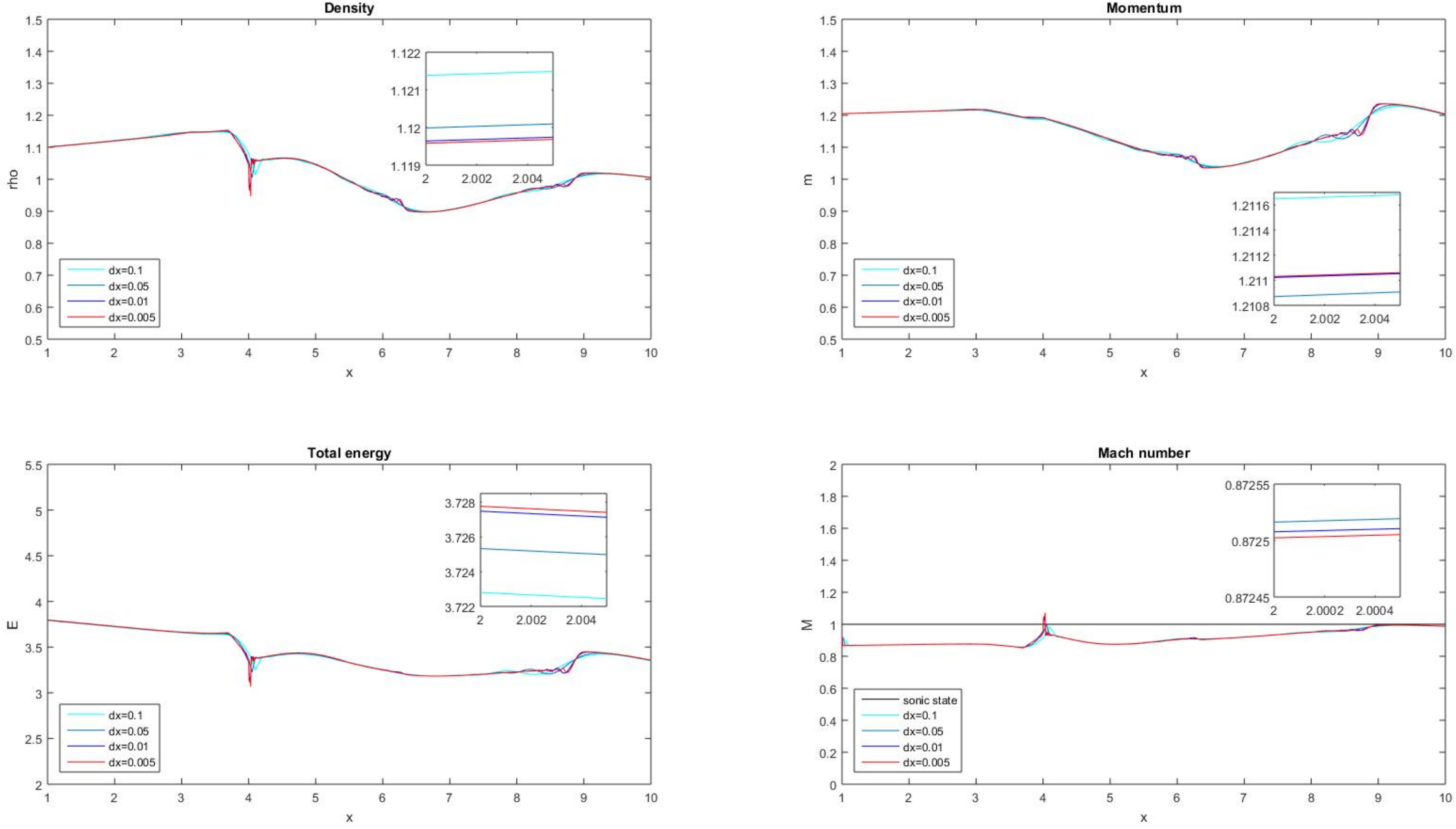}\\
Figure 4. The numerical solutions for \eqref{1.2.2} in subsonic-supersonic case.\medskip
\end{fig}
Figure \ref{fig5} displays the friction effect on the solution, we fix the grid size $\D x=0.01$ and the heat parameter $\b=1$ and set the friction parameter $\a=0.5,\ 0.1,\ 0.05$. It shows us that the greater friction causes the smaller momentum and Mach number, and thus the density distribution is pulled towards more left direction due to the contraction-expansion shape of the nozzle.
\begin{fig}
\label{fig5}
\includegraphics[scale=0.265]{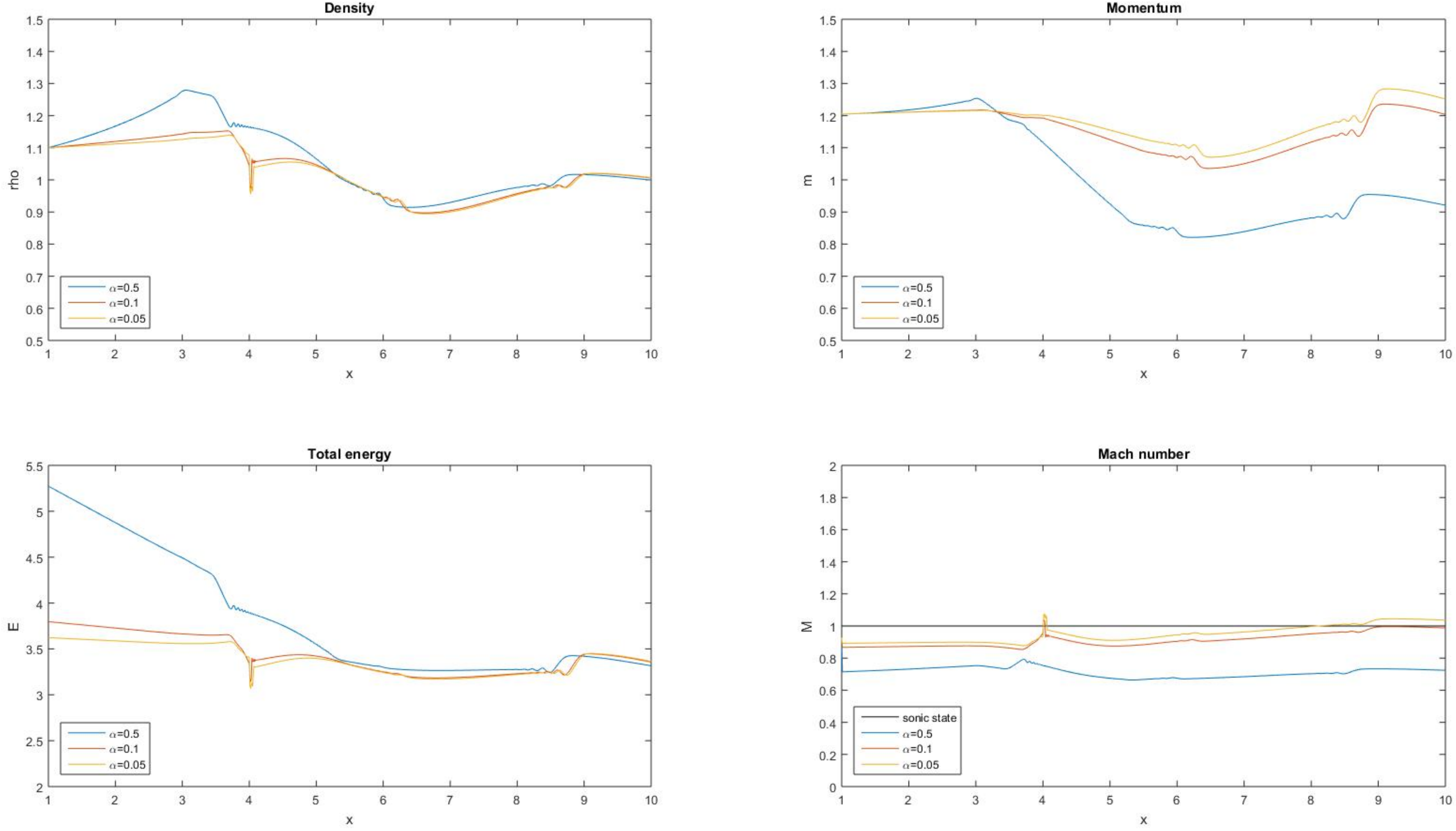}\\
Figure 5. The solutions for \eqref{1.2.2} in supersonic-subsonic case with different $\a$.\medskip
\end{fig}
In Figure \ref{fig6}, we investigate the heat effect on the solution. Fix the grid size $\D x=0.01$ and the friction parameter $\a=0.1$ and set the heat parameter $\b=4,\ 1,\ 0.25$. We see that the more heat effect at the center of the nozzle causes the lower momentum but the larger total energy, which leads the larger pressure at there, the density distribution is pushed away from the center of the nozzle.
\begin{fig}
\label{fig6}
\includegraphics[scale=0.265]{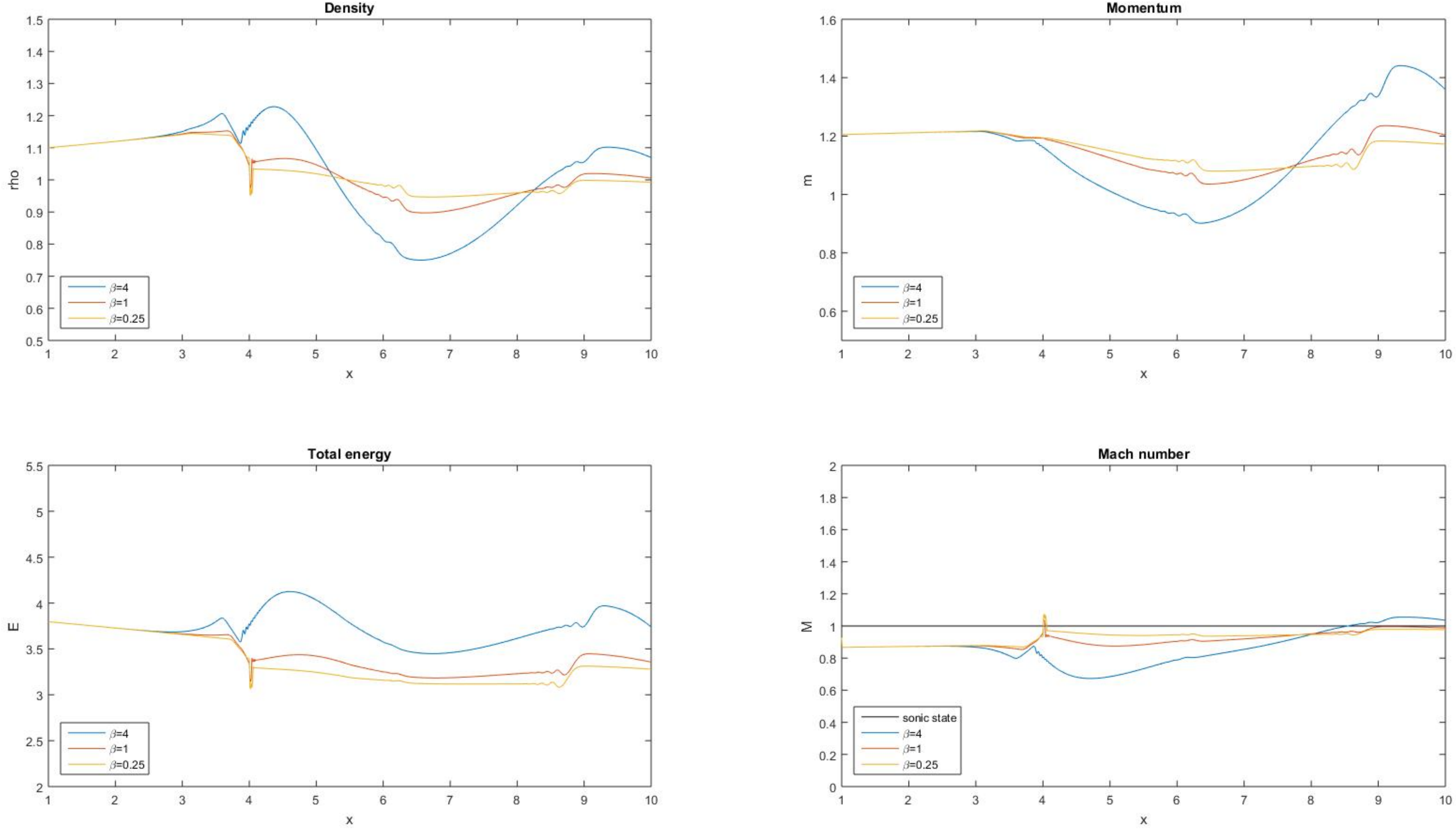}\\
Figure 6. The solutions for \eqref{1.2.2} in subsonic-supersonic case with different $\b$.\medskip
\end{fig}
}
\end{exam}

\begin{exam}
\label{exam5.2}
{\rm(Contraction-Expansion nozzle in supersonic-subsonic case)\\
The setting is the same as the Example 5.1 but impose the initial velocity as \eqref{supsub}. The solution profile at $t=2$ with $\a=0.1,\ \b=1$ in grid sizes $\D x=0.1,\ 0.05,\ 0.01,\ 0.005$ are shown in Figure \ref{fig7}. According to the given initial data, the initial shock is located at $x=4$. Figure \ref{fig7} displays that the shock front passes through the nozzle which located at $x=5$ when $t=2$. This gives us the numerical evidence that the solution exists globally under the conditions ($A_1$)-($A_4$).
\begin{fig}
\label{fig7}
\includegraphics[scale=0.265]{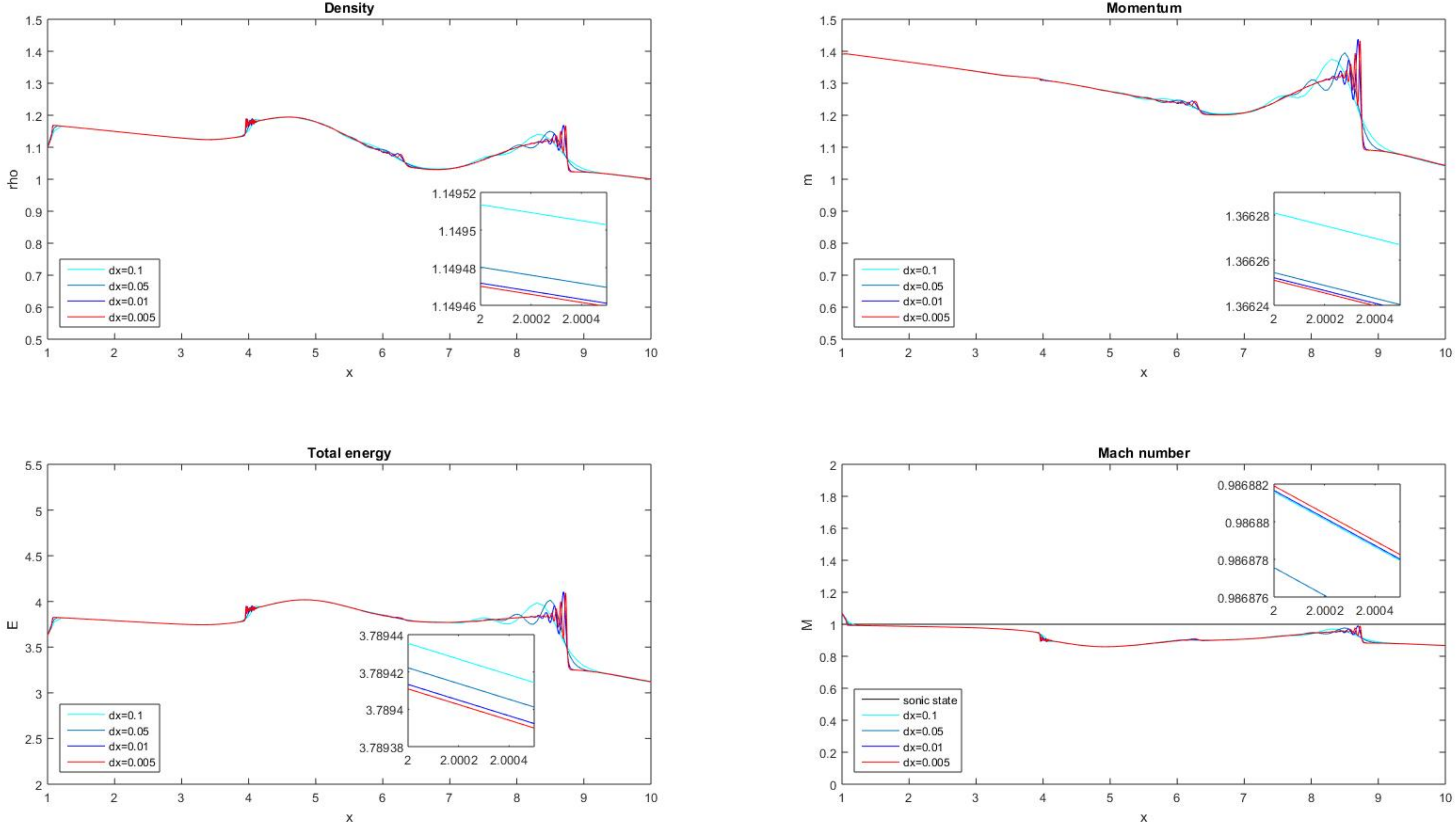}\\
Figure 7. The numerical solutions for \eqref{1.2.2} in supersonic-subsonic case.\medskip
\end{fig}
The friction effect for supersonic-subsonic case are shown in Figure \ref{fig8}. As in Figure \ref{fig5}, the greater friction causes the smaller momentum and Mach number. and the density distribution pulled towards more left direction. We also see that the greater friction cause the slower wave propagation form the location of the shock front.
\begin{fig}
\label{fig8}
\includegraphics[scale=0.265]{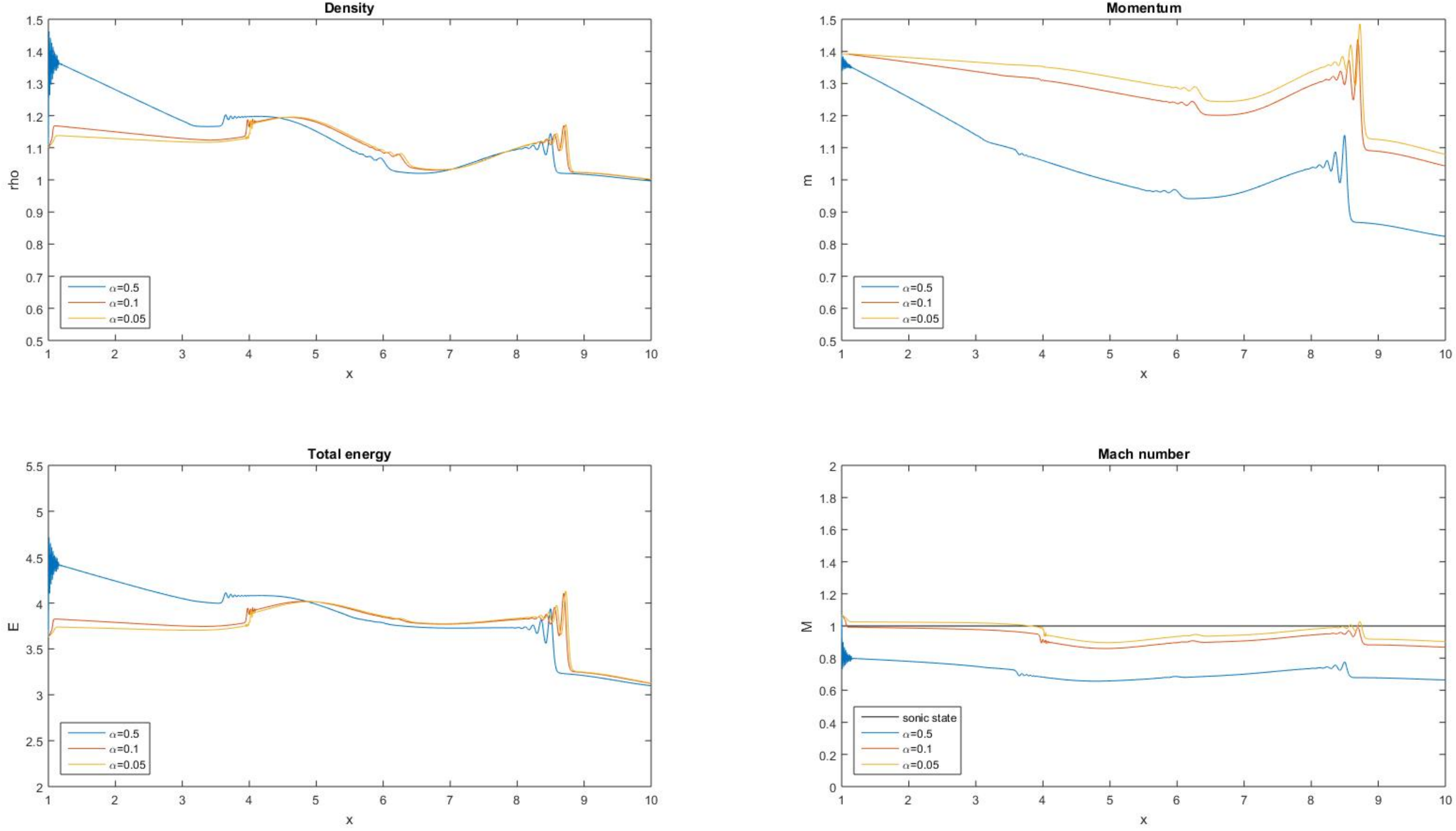}\\
Figure 8. The solutions for \eqref{1.2.2} in supersonic-subsonic case with  different $\a$.\medskip
\end{fig}
The heat effect for supersonic-subsonic case are shown in Figure \ref{fig9}. As in Figure \ref{fig6}, the more heat effect at the center of the nozzle causes the lower momentum but the larger total energy, and the density distribution is pushing away from the center of the nozzle. We also see that the greater heat cause the faster wave propagation form the location of the shock front.
\begin{fig}
\label{fig9}
\includegraphics[scale=0.265]{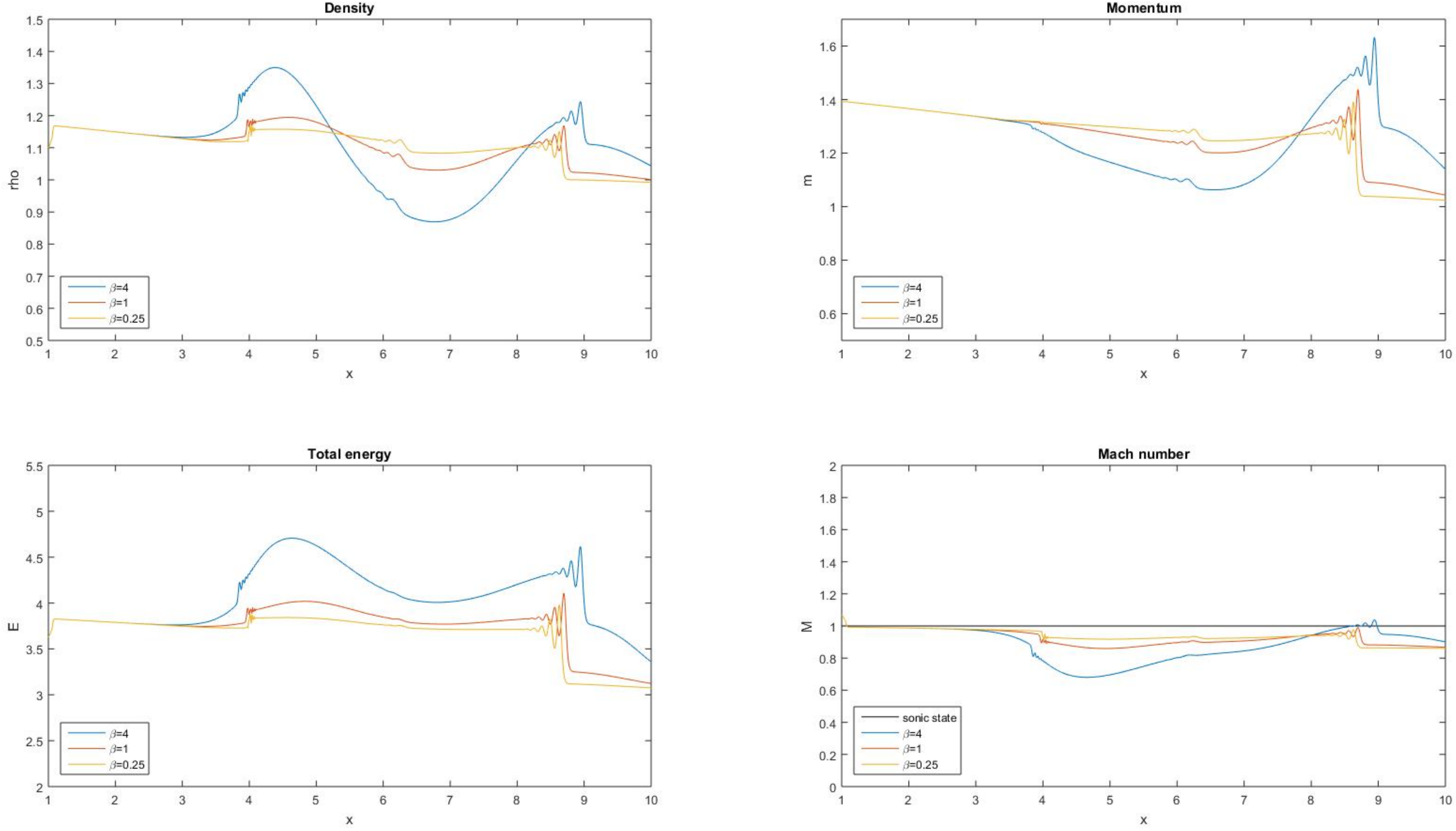}\\
Figure 9. The solutions for \eqref{1.2.2} in subsonic-supersonic case with different $\b$.\medskip
\end{fig}
}
\end{exam}

\begin{exam}
\label{exam5.3}
{\rm(Contraction-Expansion versus Expansion-Contraction)\\
For the expansion-contraction nozzle, if we set the cross section area $a(x)$ as $-\frac{(x-5)^2}{160}+10$, then the solution profile (in both transonic cases) is similar to the contraction-expansion nozzle except minor difference of the value. In order to identify the difference between these two nozzles, we need to shrink the cross area functions as
$$
a_{\pm}(x)=\pm\frac{(x-5)^2}{160}+1,\quad x\in[1,10],
$$
and $a_{\pm}'(x)\equiv 0$ when $x\in\mathbb{R}\backslash[1,10]$. Denote $h_1^{\pm},\ h_2^{\pm}$ are the corresponding functions of $a_{\pm}(x)$ in \eqref{1.2.0}. Using the aforementioned setting, we have
$$
\max_{x\in[1,10]}\Big(\frac{7-\gamma}{3}h_1^+(x)u_0(x)+ \frac{4}{3}h_2^+(x)u_0(x) -\frac{\gamma(\gamma-1)}{\rho_0(x)c^2_0(x)}\beta q(x,0)\Big)<0,\ \text{for }\a>0.066743,\ \b>0,
$$
$$
\max_{x\in[1,10]}\Big(\frac{7-\gamma}{3}h_1^-(x)u_0(x)+ \frac{4}{3}h_2^-(x)u_0(x) -\frac{\gamma(\gamma-1)}{\rho_0(x)c^2_0(x)}\beta q(x,0)\Big)<0,\ \text{for }\a>0.096999,\ \b>0.
$$
In this example, we fix the grid size $\D x=0.01$ the friction parameter $\a=0.1$, the heat parameter $\b=1$. The comparison of contraction-expansion and expansion-contraction nozzles in both transonic cases are shown in Figure \ref{fig10} and Figure \ref{fig11} below. From both figures, we can easily see that the density and momentum distributions are larger on the contraction portion of the nozzle, and lower on the expansion portion of the nozzle. This gives us confidence to our numerical simulation since it fit the common physical knowledge.
\begin{fig}
\label{fig10}
\includegraphics[scale=0.265]{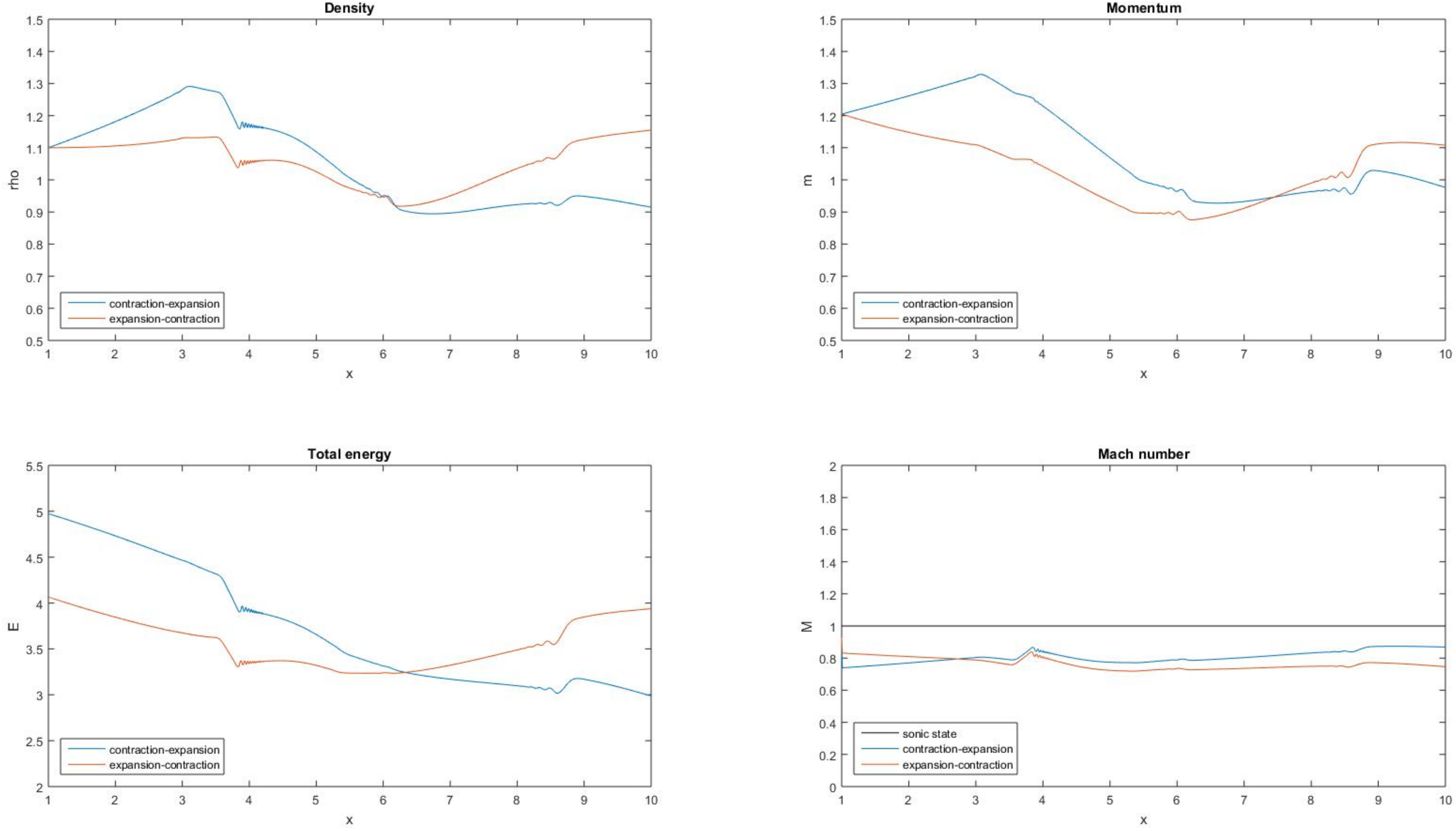}\\
Figure 10. Comparison of the contraction-expansion and expansion-contraction nozzles in subsonic-supersonic case.\medskip
\end{fig}
\begin{fig}
\label{fig11}
\includegraphics[scale=0.265]{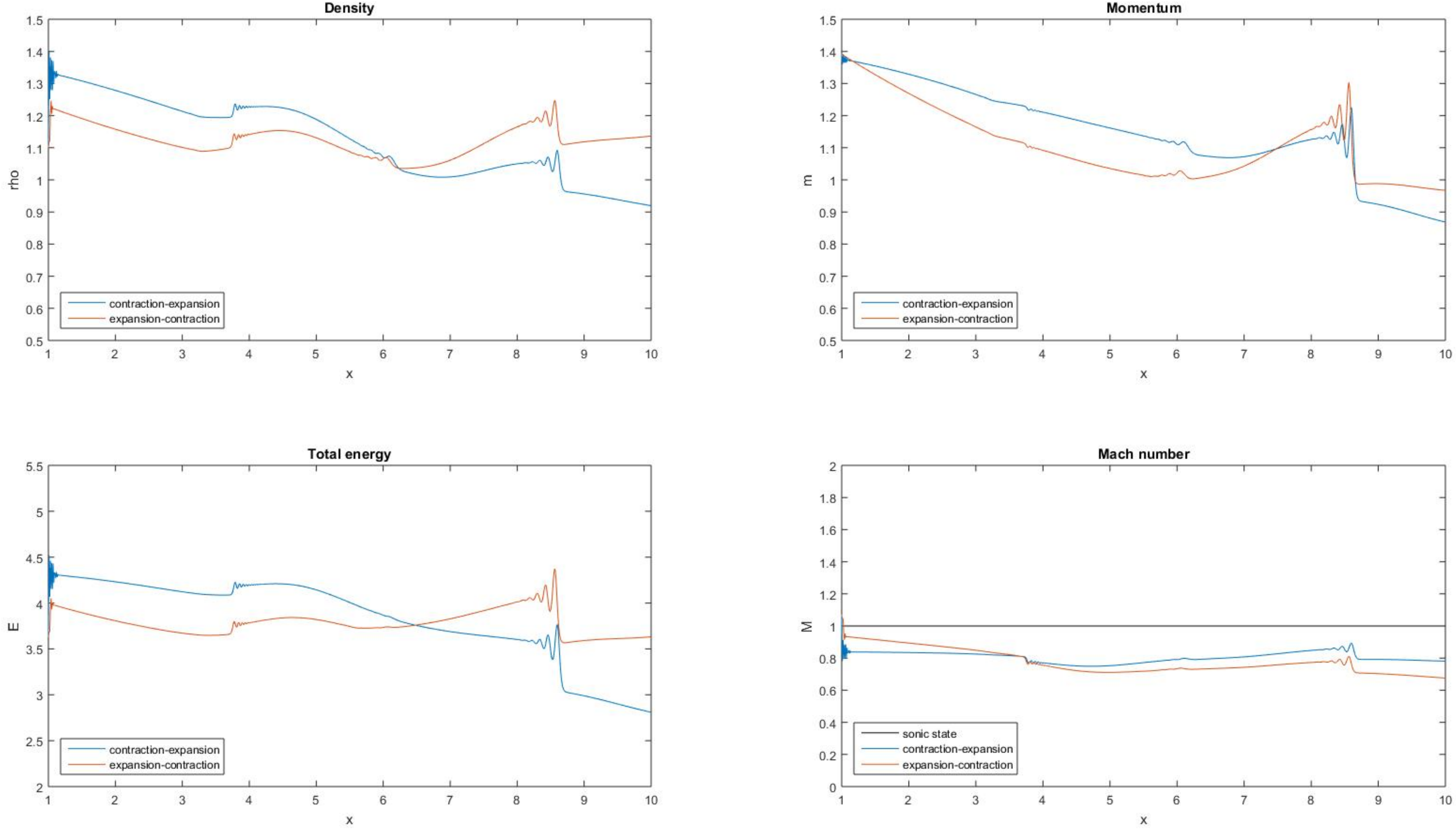}\\
Figure 11. Comparison of the contraction-expansion and expansion-contraction nozzles in supersonic-subsonic case.\medskip
\end{fig}
}
\end{exam}

\vskip 1cm



\end{document}